\DeclareMathAlphabet{\mathfr}{U}{euf}{m}{n}
\newtheorem{theorem}{Theorem}[section]
\newtheorem{proposition}[theorem]{Proposition}
\newtheorem{corollary}[theorem]{Corollary}
\newtheorem{hypothesis}[theorem]{Hypothesis}
\newtheorem{lemma}[theorem]{Lemma}
\newtheorem{question}[theorem]{Question}
\newtheorem{claim}[theorem]{Claim}
\theoremstyle{remark}
\newtheorem{remark}[theorem]{Remark}
\newtheorem{definition}[theorem]{Definition}
\newcommand\acc[2]{\ensuremath{{}^{#1}\hskip-0.3ex{#2}}}
\newcommand{\lra}{\longrightarrow}
\newcommand{\ra}{\rightarrow}
\renewcommand{\inf}{\mathrm{inf}}
\newcommand{\res}{\mathrm{res}}
\newcommand{\Q}{\mathbb Q}
\newcommand{\Qbar}{{\overline{\mathbb Q}}}
\newcommand{\Gal}{\mathrm{Gal}}
\newcommand{\Z}{\mathbb Z}
\newcommand{\C}{\mathbb C}
\newcommand{\GL}{\mathrm{GL}}
\newcommand{\M}{\mathrm{M }}
\newcommand{\End}{\operatorname{End}}
\newcommand{\Hom}{\operatorname{Hom}}
\newcommand{\Jac}{\operatorname{Jac}}
\newcommand{\Res}{\operatorname{Res}}
\newcommand{\Tr}{\operatorname{Tr}}
\newcommand{\cO}{\mathcal{O}}
\newcommand{\D}{\mathcal{D}}
\newcommand{\Nm}{\operatorname{N}}
\newcommand{\fq}{\mathfrak{q}}
\newcommand{\fp}{\mathfrak{p}}
\newcommand{\cA}{\mathcal{A}}
\numberwithin{equation}{section}
\newcommand{\II}{\mathbb{I}}
\newcommand{\cyc}[1]{{\mathrm{C}_#1}}
\newcommand{\dih}[1]{{\mathrm{D}_#1}}
\begin{document}
\title[Endomorphism algebras of $\Qbar$-split abelian surfaces over $\Q$]{Endomorphism algebras of geometrically split abelian surfaces over $\Q$}
\author{Francesc Fit\'e}
\address{Department of Mathematics\\
Massachussetts Institute of Technology \\
77 Massachussetts Avenue \\ 02139 Cambridge, Massachussetts\\ USA}
\email{ffite@mit.edu}
\urladdr{http://www-math.mit.edu/~ffite/}

\author{Xavier Guitart}
\address{Departament de Matemàtiques i Informàtica\\
Universitat de Barcelona\\Gran via de les Corts Catalanes, 585\\
08007 Barcelona, Catalonia
}
\email{xevi.guitart@gmail.com}
\urladdr{http://www.maia.ub.es/~guitart/}
\date{\today}

\begin{abstract} We determine the set of geometric endomorphism algebras of geometrically split abelian surfaces defined over $\Q$. In particular we find that this set has cardinality 92. The essential part of the classification consists in determining the set of quadratic imaginary fields $M$ with class group $\cyc 2 \times \cyc 2$ for which there exists an abelian surface $A$ defined over $\Q$ which is geometrically isogenous to the square of an elliptic curve with CM by $M$. We first study the interplay between the field of definition of the geometric endomorphisms of $A$ and the field $M$. This reduces the problem to the situation in which $E$ is a $\Q$-curve in the sense of Gross. We can then conclude our analysis by employing Nakamura's method to compute the endomorphism algebra of the restriction of scalars of a Gross $\Q$-curve.
\end{abstract}
\maketitle
\tableofcontents

\section{Introduction}

Let $A$ be an abelian variety of dimension $g\geq 1$ defined over a number field~$k$ of degree~$d$. Let us denote by $A_\Qbar$ its base change to $\Qbar$. We refer to $\End(A_\Qbar)$, the $\Q$-algebra spanned by the endomorphisms of~$A$ defined over $\Qbar$, as the $\Qbar$-endomorphism algebra of $A$. For a fixed choice of $g$ and $d$, it is conjectured that the set of possibilities for $\End(A_\Qbar)$ is finite. A slightly stronger form of this conjecture, applying to endomorphism rings of abelian varieties over number fields, has been attributed to Coleman in \cite{BFGR06}. 

Hereafter, let $A$ denote an abelian surface defined over $\Q$. In the case that~$A$ is geometrically simple (that is, $A_\Qbar$ is simple), the previous conjecture stands widely open. If $A$ is principally polarized and has CM it has been shown (see \cite{MU01}, \cite{BS17}, and \cite{KS23}) that $\End(A_\Qbar)$ is one of $13$ possible quartic CM fields. However, narrowing down to a finite set the possible quadratic real fields and quaternion division algebras over $\Q$ which occur as $\End(A_\Qbar)$ for some $A$ has escaped all attempts of proof. See also \cite{OS18} for recent more general results which prove Coleman's conjecture for CM abelian varieties. 

In the present paper, we focus on the case that $A$ is geometrically split, that is, the case in which~$A_\Qbar$ is isogenous to a product of elliptic curves, which we will assume from now on. Let~$\cA$ be the set of possibilities for $\End(A_\Qbar)$, where $A$ is a geometrically split abelian surface over $\Q$. 

Let us briefly recall how scattered results in the literature ensure the finiteness of~$\cA$ (we will detail the arguments in Section~\ref{section: main proofs}). Indeed, if $A_\Qbar$ is isogenous to the product of two non-isogenous elliptic curves, then the finiteness (and in fact the precise description) of the set of possibilities for $\End(A_\Qbar)$ follows from \cite[Proposition 4.5]{FKRS12}. If, on the contrary, $A_\Qbar$ is isogenous to the square of an elliptic curve, then the finiteness of the set of possibilities for $\End(A_\Qbar)$ was established by Shafarevich in \cite{Sha96} (see also \cite{Cre92} and \cite{Gon11} for the determination of the precise subset corresponding to modular abelian surfaces). In the present work, we aim at an effective version of Shafarevich's result. Our starting point is \cite[Theorem 1.4]{FG18}, which we recall in our particular setting.
\begin{theorem}[\cite{FG18}]\label{theorem: FG18}
If $A$ is an abelian surface defined over $\Q$ such that $A_\Qbar$ is isogenous to the square of an elliptic curve $E/\Qbar$ with complex multiplication (CM) by a quadratic imaginary field $M$, then the class group of $M$ is $1$, $\cyc 2$, or $\cyc 2 \times \cyc 2$.
\end{theorem}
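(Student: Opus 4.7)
Since $A$ is defined over $\Q$, for every $\sigma \in G_\Q$ we have $A^\sigma = A$, and pulling back the $\Qbar$-isogeny $A_\Qbar \sim E \times E$ yields $E^\sigma \sim E$ over $\Qbar$; thus $E$ is a $\Q$-curve in Gross's sense. Since $\End(E^\sigma) = \sigma(\End(E))$ is again an order in $M$, every Galois conjugate of $E$ has CM by an order in $M$, and up to replacing $E$ by an isogenous curve we may assume $\End(E) \cong \cO_M$. The elliptic curves over $\Qbar$ with CM by $\cO_M$ form a principal homogeneous space under $\mathrm{Cl}(M)$, so the assignment $E^\sigma \cong [\mathfrak{a}_\sigma] \star E$ defines a map
\[
\chi \colon G_\Q \longrightarrow \mathrm{Cl}(M), \qquad \sigma \longmapsto [\mathfrak{a}_\sigma].
\]
A direct check using $\sigma([\mathfrak a] \star E) = [\sigma(\mathfrak a)] \star E^\sigma$ shows $\chi$ is a $1$-cocycle for the $G_\Q$-action on $\mathrm{Cl}(M)$, which factors through $\Gal(M/\Q) = \{1,c\}$ with $c$ acting by inversion (since $\mathfrak a \bar{\mathfrak a}$ is principal). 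By classical CM theory, $\chi|_{G_M}$ agrees with the Artin reciprocity map for the Hilbert class field $H$ of $M$ and is in particular surjective onto $\mathrm{Cl}(M)$.

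The heart of the proof is to exploit the hypothesis that $E \times E$ actually descends to an abelian surface $A/\Q$, not merely that the $\Qbar$-isogeny class of $E$ is Galois-stable. Fix a $\Qbar$-isogeny $\phi \colon A_\Qbar \to E \times E$. For each $\sigma \in G_\Q$ the composition $\phi^\sigma \circ \phi^{-1}$ lies in $\Hom^0(E \times E, E^\sigma \times E^\sigma) \cong \mathrm{M}_2(\Hom^0(E, E^\sigma))$, and the family $\{\phi^\sigma \circ \phi^{-1}\}_\sigma$ must satisfy a cocycle condition with values in this twisted matrix group. Decoupling the ``scalar'' contribution (coming from $\chi$) from the matrix part and imposing the cocycle identity forces the image of $\chi$ to lie in the $2$-torsion $\mathrm{Cl}(M)[2]$. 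Combined with the surjectivity of $\chi|_{G_M}$, this yields that $\mathrm{Cl}(M)$ has exponent $\leq 2$, so $H$ coincides with the genus field of $M$ and is multiquadratic over $\Q$.

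Finally, the constraint $\dim A = 2$ bounds the rank of $\mathrm{Cl}(M)$. Once $\mathrm{Cl}(M)$ has exponent $2$, the degree $[H:\Q]$ equals $2\cdot|\mathrm{Cl}(M)|$; since $H$ is contained in the field of definition of $\End(A_\Qbar) \cong \mathrm{M}_2(\cO_M)$, and the possible Galois images acting on $\mathrm{M}_2(M)$ via outer automorphisms are severely restricted for an abelian surface, one extracts $|\mathrm{Cl}(M)| \leq 4$. Together with the exponent bound this yields $\mathrm{Cl}(M) \in \{1, \cyc 2, \cyc 2 \times \cyc 2\}$. The main obstacle is the second step: correctly identifying the descent-obstruction cocycle and isolating its ``class group part,'' and then showing that this part is annihilated by $2$; by comparison, the last paragraph's dimension bound is essentially structural bookkeeping.
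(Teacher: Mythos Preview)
This theorem is not proved in the present paper; it is quoted from \cite{FG18} (as Theorem~1.4 there) and used as input. So there is no in-paper proof to compare against directly, though enough of the \cite{FG18} machinery is recalled here (Theorem~\ref{th: 2-torsion}, the decomposition~\eqref{eq: dec of 2-torsion}, and the reference to \cite[Theorem~2.14]{FG18}) that one can see what the intended argument looks like.

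Your outline has the right architecture, but the two substantive steps are asserted rather than proved. In your second paragraph you write that ``decoupling the scalar contribution \dots\ and imposing the cocycle identity forces the image of $\chi$ to lie in $\mathrm{Cl}(M)[2]$.'' This is the entire content of the exponent-$2$ claim, and no mechanism is given. The cocycle identity on $\{\phi^\sigma\circ\phi^{-1}\}$ holds tautologically; it does not by itself constrain $\chi$. What actually does the work is the observation that $V=\Hom(E_K,A_K)$ is a \emph{two}-dimensional $M$-vector space carrying the $c$-representation $\varrho_V$ of Lemma~\ref{lemma: Qcurvecact}; taking $\det\varrho_V$ exhibits $(c_E^K)^2$ as a coboundary, whence $\gamma_E^K$ is $2$-torsion (Theorem~\ref{th: 2-torsion}). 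One then passes to the degree component $\bar\gamma_E\in\Hom(\Gal(K/M),P/P^2)$ in~\eqref{eq: dec of 2-torsion}: since the degree of an isogeny $\acc\sigma E\to E$ between $\cO_M$-CM curves reads off the ideal class of $\sigma$, this homomorphism factors the Artin map through an exponent-$2$ group, and \cite[Theorem~2.14]{FG18} packages this as $H\subseteq N$ (the fixed field of the squares in $\Gal(K/M)$), giving $\mathrm{Cl}(M)=\Gal(H/M)$ exponent~$\le 2$. Your ``decoupling the scalar part'' is, morally, the determinant step, but you have not said so, and without it nothing forces the $2$-torsion conclusion.

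Your final paragraph is also too loose. The bound $|\mathrm{Cl}(M)|\le 4$ is not extracted from ``outer automorphisms of $\M_2(M)$'' in any direct sense; it comes from the explicit list~\eqref{equation: KoverM} of possibilities for $\Gal(K/M)$ supplied by \cite{FKRS12}. Since $H\subseteq K$, the group $\mathrm{Cl}(M)=\Gal(H/M)$ is an abelian exponent-$2$ quotient of one of those groups, and a glance at the list shows the largest such quotient has order~$4$. That is structural bookkeeping, as you say, but it should be stated correctly.
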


It should be noted that several other works can be used to see that, in the situation of the theorem, the exponent of the class group of $M$ divides $2$ (see \cite{Sch07} or \cite{Kan11}, for example).

While it is an easy observation that an abelian surface $A$ as in the theorem can be found for each quadratic imaginary field $M$ with class group $1$ or $\cyc 2$ (see \cite[Remark~2.20]{FG18} and also Section~\ref{section: main proofs}), the question whether such an $A$ exists for each of the fields $M$ with class group $\cyc 2\times \cyc 2$ is far from trivial. The aforementioned results are thus not sufficient for the determination of the set $\cA$. The main contribution of this article is the following theorem.

\begin{theorem}\label{theorem: main}
Let $M$ be a quadratic imaginary field with class group $\cyc 2\times \cyc 2$. There exists an abelian surface defined over $\Q$ such that $A_\Qbar$ is isogenous to the square of an elliptic curve $E/\Qbar$ with CM by $M$ if and only if the discriminant of $M$ belongs to the set
\begin{align}\label{eq: set}
\{-84, -120, -132, -168, -228, -280,  -372, -408, -435,  -483, \\ \nonumber -520, -532, -595, -627, -708,  -795, -1012, -1435  \}.
\end{align}
\end{theorem}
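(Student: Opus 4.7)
The plan is to reduce the existence question to the study of CM $\Q$-curves over the Hilbert class field $H$ of $M$, and then apply Nakamura's method to the Weil restriction $\Res_{H/\Q}(E)$ to detect when a $2$-dimensional abelian surface over $\Q$ geometrically isogenous to $E^2$ arises as an isogeny factor. First, I would show that if $A/\Q$ satisfies $A_\Qbar \sim E^2$ with $E$ having CM by $M$, the natural Galois action on $\End(A_\Qbar) \simeq M_2(M)$ factors through $\Gal(H/\Q)$. Since complex conjugation acts on $\Gal(H/M) \simeq \cyc 2 \times \cyc 2$ by inversion---which is trivial on a group of exponent $2$---the group $\Gal(H/\Q)$ is isomorphic to $(\cyc 2)^3$. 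One then argues that $E$ descends to $H$ and, for every $\sigma \in \Gal(H/\Q)$, admits an isogeny $\phi_\sigma : E^\sigma \to E$ defined over $H$; in other words, $E$ is a Gross $\Q$-curve with CM by $M$. The collection $\{\phi_\sigma\}$ yields a $2$-cocycle $c \in H^2(\Gal(H/\Q), M^\times)$ whose class encodes the descent obstruction.

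Next, Nakamura's method computes the endomorphism algebra $\End(\Res_{H/\Q}(E))$ in terms of a twisted group algebra built from $M$, $\Gal(H/\Q)$, and $c$. Its Wedderburn decomposition parallels the $\Q$-isogeny decomposition of $\Res_{H/\Q}(E)$, and an abelian surface $A/\Q$ with $A_\Qbar \sim E^2$ corresponds precisely to the presence of a simple factor isomorphic to $M_2(M)$ in this decomposition. Setting $G := \Gal(H/\Q) \simeq (\cyc 2)^3$, the existence of such a factor can be translated into an explicit condition on the image of $c$ in $H^2(G, M^\times / M^{\times 2})$, which can be phrased as the non-degeneracy of a bilinear pairing on a suitable subquotient of $G$.

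The hardest part will be making the cocycle $c$ explicit. Following Gross, $\phi_\sigma$ corresponds to the action of an ideal class $[\mathfrak{a}_\sigma] \in \mathrm{Cl}(M)$ on $E$, and $c(\sigma,\tau)$ is (up to units) a generator of the principal ideal $\mathfrak{a}_\sigma \cdot \sigma(\mathfrak{a}_\tau) \cdot \mathfrak{a}_{\sigma\tau}^{-1}$; one must pin down its class modulo squares, which is controlled by the genus theory of $M$ and the splitting behavior of the primes dividing the discriminant. Once this translation is in place, the problem becomes a finite verification for the imaginary quadratic fields $M$ with class group $\cyc 2 \times \cyc 2$, isolating those for which the criterion holds; the outcome is the list of $18$ discriminants in \eqref{eq: set}. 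For sufficiency, $A$ is then realized as the corresponding $2$-dimensional isogeny factor of $\Res_{H/\Q}(E)$, possibly after a quadratic $\Q$-twist to correct the twist class.
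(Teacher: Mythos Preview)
Your proposal has a genuine gap at the very first step: the assertion that the Galois action on $\End(A_\Qbar)$ factors through $\Gal(H/\Q)$ is false in general. If $K/\Q$ denotes the minimal field over which all endomorphisms of $A$ are defined, then one only knows $H\subseteq K$; the possibilities for $\Gal(K/M)$ compatible with $\Gal(H/M)\simeq \cyc 2\times\cyc 2$ are $\cyc 2\times\cyc 2$, $\dih 4$, and $\dih 6$, not just the first. Consequently your claim that $E$ descends to $H$ as a Gross $\Q$-curve need not hold, and your Nakamura computation only rules out the case $K=H$. This is not a technicality: for the exceptional discriminant $D=-340$ there are no Gross $\Q$-curves at all (Proposition~\ref{proposition: McurveGrossCurve}), so your framework cannot even be set up there.

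The paper's proof handles the $K=H$ case exactly as you outline, via Nakamura's method and Table~\ref{table}. The substantive extra work is for $\Gal(K/M)\simeq\dih 4$ or $\dih 6$: the paper develops a theory of $c$-representations attached to the $2$-dimensional $M$-module $\Hom(E^*_L,A_L)$ for a suitable Ribet $M$-curve $E^*$ over $H$, proves obstruction results (Theorems~\ref{th: obstructions} and~\ref{theorem: dihedralmagic}) forcing $c_{E^*}^H(\bar\tau,\bar\tau)=\pm 1$ for some nontrivial $\bar\tau\in\Gal(H/M)$, and then shows via Lemma~\ref{lemma: p mid D} (and, for $D=-340$, an additional embedding-problem argument) that this is impossible. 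Your sketch does not address any of this, so as written it would only prove non-existence under the unjustified hypothesis $K=H$.
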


The only imaginary quadratic fields with class group $\cyc 2\times \cyc 2$ whose discriminant does not belong to \eqref{eq: set} are
\begin{equation}\label{equation: fields}
\Q(\sqrt{-195}),\, \Q(\sqrt{-312}),\, \Q(\sqrt{-340}),\, \Q(\sqrt{-555}),\, \Q(\sqrt{-715}),\, \Q(\sqrt{-760}).
\end{equation}

With Theorem~\ref{theorem: main} at hand, the determination of the set $\cA$ follows as a mere corollary (see \S\ref{section: main proofs} for the proof).

\begin{corollary}\label{corollary: main}
The set $\cA$ of $\Qbar$-endomorphism algebras of geometrically split abelian surfaces over $\Q$ is made of:
\begin{enumerate}[i)]
\item $\Q\times \Q$, $\Q\times M$, $M_1\times M_2$, where $M$, $M_1$ and $M_2$ are quadratic imaginary fields of class number $1$;
\item $\M_2(\Q)$, $\M_2(M)$, where $M$ is a quadratic imaginary field with class group $1$, $\cyc 2$, or $\cyc 2 \times \cyc 2$ and distinct from those listed in \eqref{equation: fields}.
\end{enumerate}
In particular, the set $\cA$ has cardinality 92.
\end{corollary}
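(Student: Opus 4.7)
The plan is to decompose by the geometric isogeny type of $A_\Qbar$, which is either $E_1\times E_2$ with non-isogenous elliptic factors or else $E^2$. In the first case $\End(A_\Qbar)\simeq \End(E_1)_\Q\times \End(E_2)_\Q$, while in the second $\End(A_\Qbar)\simeq \M_2(\End(E)_\Q)$; each endomorphism algebra factor is either $\Q$ or an imaginary quadratic field $M$, according as the elliptic factor is non-CM or CM by $M$. The corollary will follow by producing the exhaustive list of $\End(A_\Qbar)$ in each case, together with an explicit $\Q$-realization of every algebra in the list, and then counting.

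For the non-isogenous case I would appeal to \cite[Proposition 4.5]{FKRS12}, which shows that the possible $\End(A_\Qbar)$ are exactly $\Q\times\Q$, $\Q\times M$, and $M_1\times M_2$ (with $M_1\neq M_2$), as $M, M_1, M_2$ range over the nine imaginary quadratic fields of class number $1$, and which exhibits a $\Q$-realization of each. This gives $1+9+\binom{9}{2}=46$ algebras and accounts for item (i).

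For the square case, the algebra is either $\M_2(\Q)$ or $\M_2(M)$ according as $E$ is non-CM or has CM by the imaginary quadratic field $M$. The algebra $\M_2(\Q)$ is realized by $A=E^2$ for any non-CM $E/\Q$. By Theorem~\ref{theorem: FG18}, in the CM case the class group of $M$ is constrained to be $1$, $\cyc 2$, or $\cyc 2\times \cyc 2$. For the class number $1$ case one takes any of the nine CM elliptic curves $E/\Q$ and sets $A=E^2$; for the $\cyc 2$ case, for each of the eighteen fields $M$ with class group $\cyc 2$ a suitable $A$ is built by the restriction-of-scalars/twist construction sketched in \cite[Remark 2.20]{FG18}. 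The $\cyc 2\times \cyc 2$ case is precisely Theorem~\ref{theorem: main}, which selects exactly $18$ admissible discriminants out of the $24$ imaginary quadratic fields with this class group. This yields $1+9+18+18=46$ algebras and accounts for item (ii).

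Summing both cases gives $|\cA|=46+46=92$. The only serious mathematical content in this corollary is the realization half of the $\cyc 2\times \cyc 2$ subcase, which is exactly Theorem~\ref{theorem: main}; once that is granted, the rest of the argument is a matter of bookkeeping and of invoking the cited finiteness and realization results, together with the classical lists of imaginary quadratic fields of class number $1$ and $2$.
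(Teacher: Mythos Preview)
Your proposal is correct and follows essentially the same approach as the paper's own proof: both split according to whether the geometric elliptic factors are isogenous, invoke \cite[Proposition 4.5]{FKRS12} to force class number~$1$ in the non-isogenous case, and in the square case combine Theorem~\ref{theorem: FG18} with the explicit constructions for class groups $1$ and $\cyc 2$ (via $E^2$ and restriction of scalars, as in \cite[Remark~2.20]{FG18}) and with Theorem~\ref{theorem: main} for $\cyc 2\times\cyc 2$, arriving at the same $46+46=92$ count.
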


The paper is organized in the following manner. In Section~\ref{section: crepQcurve} we attach a $c$-representation $\varrho_V$ of degree $2$ to an abelian surface $A$ defined over $\Q$ such that $A_\Qbar$ is isogenous to the square of an elliptic curve $E/\Qbar$ with CM by $M$. It is well known that $E$ is a $\Q$-curve and that one can associate a $2$-cocycle~$c_E$ to~$E$. A $c$-representation is essentially a representation up to scalar and it is thus a notion closely related to that of projective representation. In the case of the $c$-representation $\varrho_V$ attached to $A$, the scalar that measures the failure of $\varrho_ V$ to be a proper representation is precisely the $2$-cocycle $c_E$. Choosing the language of $c$-representations instead of that of projective representations has an unexpected payoff: the tensor product of a $c$-representation $\varrho$ and its contragradient $c$-representation $\varrho^*$ is again a proper representation. We show that $\varrho_V \otimes \varrho_V^*$ coincides with the representation of $G_\Q$ on the 4 dimensional $M$-vector space $\End(A_\Qbar)$. This representation has been studied in detail in \cite{FS14} and the tensor decomposition of $\End(A_\Qbar)$ is exploited in Theorems~\ref{th: obstructions} and~\ref{theorem: dihedralmagic} to obtain obstructions on the existence of $A$. These obstructions extend to the general case those obtained in \cite[\S3.1,\S3.2]{FG18} under very restrictive hypotheses. The $c$-representation point of view also allows us to understand in a unified manner what we called \emph{group theoretic} and \emph{cohomological} obstructions in \cite{FG18}. It should be noted that one can define analogues of $\varrho_V$ in other more general situations. For example, a parallel construction in the context of geometrically isotypic abelian varieties potentially of $\GL_2$-type has been exploited in \cite{FG19} to determine a tensor factorization of their Tate modules. This can be used to deduce the validity of the Sato-Tate conjecture for them in certain cases.

In Section \ref{sec: restriction of scalars}, we describe a method of Nakamura to compute the endomorphism algebra of the restriction of scalars of certain Gross $\Q$-curves (see Definition \ref{def: Gross Q-curve} below for the precise definition of these curves). Then we apply this method to all Gross $\Q$-curves with CM by a field $M$ of class group $\cyc 2 \times \cyc 2$. This computation plays a key role in the proof of Theorem \ref{theorem: main}, both in proving the existence of the abelian surfaces for the fields $M$ different from those listed in \eqref{equation: fields}, and in proving the non-existence for the fields of \eqref{equation: fields}.

In Section \ref{section: main proofs} we culminate the proofs of Theorem~\ref{theorem: main} and Corollary~\ref{corollary: main} by assembling together the obstructions and existence results from Sections~\ref{section: crepQcurve} and~\ref{sec: restriction of scalars}. We essentially show that we can use the results of Section~\ref{section: crepQcurve} to reduce to the case of Gross $\Q$-curves, and then we deal with this case using the results of Section~\ref{sec: restriction of scalars}

\subsection*{Notations and terminology} For $k$ a number field, we will work in the category of abelian varieties up to isogeny over $k$.  Note that isogenies become invertible in this category. Given an abelian variety~$A$ defined over $k$, the set of endomorphisms $\End(A)$ of $A$ defined over~$k$ is endowed with a $\Q$-algebra structure. More generally, if $B$ is an abelian variety defined over $k$, we will denote by $\Hom(A,B)$ the $\Q$-vector space of homomorphisms from~$A$ to~$B$ that are defined over~$k$. We note that for us $\End(A)$ and $\Hom(A,B)$ denote what some other authors call $\End^0(A)$ and $\Hom^0(A,B)$. We will write $A\sim B$ to mean that~$A$ and~$B$ are isogenous over~$k$. If $L/k$ is a field extension, then $A_L$ will denote the base change of $A$ from $k$ to $L$. In particular, we will write $A_L\sim B_L$ if~$A$ and~$B$ become isogenous over~$L$, and we will write $\Hom(A_L,B_L)$ to refer to what some authors write as $\Hom_L(A,B)$.

\subsection*{Acknowledgements} Fit\'e is thankful to the organizers of the workshop ``Arithmetic Aspects of Explicit Moduli Problems" held at BIRS (Banff) in May 2017, where he explained Theorem~\ref{theorem: FG18} and raised the question on the existence of an abelian surface over $\Q$ with $\End(A_\Qbar)\simeq \M_2(M)$ for an $M$ with class group $\cyc 2 \times \cyc 2$. We thank Andrew Sutherland and John Voight for providing a positive answer to this question by pointing out the existence of an abelian surface (actually the Jacobian of a genus $2$ curve) with the desired property for the field $M=\Q(\sqrt{-132})$. We also thank Noam Elkies for providing three additional genus $2$ curves over $\Q$, these covering the fields $M=\Q(\sqrt{-408})$, $\Q(\sqrt{-435})$, and $\Q(\sqrt{-708})$. These four examples motivated the present paper. We are grateful to Marco Streng for alerting us of the existence of \cite{BS17} and \cite{KS23}.

Fit\'e was funded by the Excellence Program Mar\'ia de Maeztu MDM-2014-0445. Fit\'e was partially supported by MTM2015-63829-P. Guitart was funded by projects MTM2015-66716-P and MTM2015-63829-P. This project has received funding from the European Research Council (ERC) under the European Union's Horizon 2020 research and innovation programme (grant agreement No 682152).

\section{$c$-representations and $k$-curves}\label{section: crepQcurve} 

The goal of this section is to obtain obstructions to the existence of abelian surfaces defined over $\Q$ such that $\End(A_\Qbar)\simeq \M_2(M)$, where $M$ is a quadratic imaginary field. To this purpose, we analyze the interplay between the $k$-curves and $c$-representations that arise from them.

\subsection{$c$-representations: general definitions}
Let $V$ be a vector space of finite dimension over a field $k$ and let $G$ be a finite group. We say that a map
$$
\varrho_V:G\rightarrow \GL(V)
$$
is a $c$-representation (of the group $G$) if $\varrho_V(1)=1$ and there exists a map
$$
c_V:G \times G \rightarrow k^\times
$$
such that for every $\sigma,\tau \in G$ one has
\begin{equation}\label{equation: crep}
\varrho_V(\sigma)\varrho_V(\tau)=\varrho_V(\sigma\tau)c_V(\sigma,\tau)\,.
\end{equation}

\begin{remark}\label{remark: inverse}
The following properties follow easily from the definition:
\begin{enumerate}[i)]
\item Note that we have 
$$
\varrho_V(\sigma^{-1})=\varrho_V(\sigma)^{-1}c_V(\sigma^{-1},\sigma) \quad \text{and} \quad \varrho_V(\sigma^{-1})=\varrho_V(\sigma)^{-1}c_V(\sigma,\sigma^{-1})\,.
$$ 
In particular, $c_V(\sigma,\sigma^{-1})=c_V(\sigma^{-1},\sigma)$.
\item Note that if $c_V(\cdot,\cdot)=1$, the notion of $c$-representation corresponds to the usual notion of representation.
\end{enumerate}
\end{remark}

Let $V$ and $W$ be $c$-representations of the group $G$. Let $T=\Hom(V,W)$ denote the space of $k$-linear maps from $V$ to $W$. A homomorphism of $c$-representations from $V$ to $W$ is a $k$-linear map $f\in T$ such that 
$$
f(v)=\varrho_W(\sigma)(f(\varrho_V(\sigma)^{-1}v))
$$ 
for every $v\in V$ and $\sigma \in G$.

Consider now the map
$$
\varrho_T: G\rightarrow \GL(\Hom(V,W))\,,
$$
defined by 
$$
(\varrho_T(\sigma)f)(v)=\varrho_W(\sigma)(f(\varrho_V(\sigma)^{-1}v))\,.
$$

\begin{proposition}\label{equation: homcrep}
The map $\varrho_T$ together with the map
$c_T:G\times G \rightarrow k^\times$ defined by $c_T=c_V^{-1}\cdot c_W$ equip $T$ with the structure of a $c$-representation.
\end{proposition}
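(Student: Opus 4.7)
The plan is a direct verification from the definitions; no theory is needed beyond the formula \eqref{equation: crep} for $\varrho_V$ and $\varrho_W$ and the normalization $\varrho_V(1)=\varrho_W(1)=1$. Two things must be checked: that $\varrho_T(1)$ is the identity on $T$, and that the twisted multiplicativity
\[
\varrho_T(\sigma)\varrho_T(\tau) \;=\; \varrho_T(\sigma\tau)\,c_T(\sigma,\tau)
\]
holds with the prescribed $c_T=c_V^{-1}\cdot c_W$. The first is immediate by plugging into the definition of $\varrho_T$. Once the second is established, applying it with $\tau=\sigma^{-1}$ shows that $\varrho_T(\sigma)$ is invertible (with inverse a scalar multiple of $\varrho_T(\sigma^{-1})$), so $\varrho_T$ indeed takes values in $\GL(T)$; and the values of $c_T$ lie in $k^\times$ because those of $c_V$ and $c_W$ do.

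The core computation is a one-line unwinding. I would evaluate $\varrho_T(\sigma)\varrho_T(\tau)f$ on an arbitrary $v\in V$ using the definition of $\varrho_T$ twice. This produces two outer factors $\varrho_W(\sigma)\varrho_W(\tau)$ acting on $f(\varrho_V(\tau)^{-1}\varrho_V(\sigma)^{-1}v)$. The $c$-representation relation for $\varrho_W$ collapses the outer factors into $\varrho_W(\sigma\tau)\,c_W(\sigma,\tau)$. For the inner factors, inverting \eqref{equation: crep} applied to $\varrho_V$ yields
\[
\varrho_V(\tau)^{-1}\varrho_V(\sigma)^{-1} \;=\; c_V(\sigma,\tau)^{-1}\,\varrho_V(\sigma\tau)^{-1},
\]
and since $c_V(\sigma,\tau)^{-1}$ is a scalar it passes through $f$. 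Collecting the two scalars $c_W(\sigma,\tau)$ and $c_V(\sigma,\tau)^{-1}$ and recognizing the remaining expression as $(\varrho_T(\sigma\tau)f)(v)$ gives the desired identity.

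There is no real obstacle; the only point to be careful about is the order of inverses when passing from the cocycle relation for $\varrho_V$ to the relation for $\varrho_V(\tau)^{-1}\varrho_V(\sigma)^{-1}$, which is exactly what forces $c_V$ to appear with a negative sign in $c_T$. I would finish by observing that the argument is symmetric in $V$ and $W$ in the expected way and that it recovers the standard fact about $\Hom$ of projective representations, phrased in the language of $c$-representations.
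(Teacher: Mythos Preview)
Your argument is correct: the direct verification you describe goes through exactly as you say, and the only delicate point (the order of inverses on the $V$-side, producing $c_V^{-1}$ rather than $c_V$) is handled correctly.

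The paper, however, does not argue this way. Instead of a single unwinding, it builds the result in three small pieces: first it treats the special case $W=k$ with trivial action, identifying $V^*$ as a $c$-representation with $c_{V^*}=c_V^{-1}$ (here $\varrho^*(\sigma)$ is just the inverse transpose of $\varrho_V(\sigma)$, so the claim is immediate from \eqref{equation: crep}); then a first lemma shows that the tensor product $V\otimes W$ is a $c$-representation with $c_\otimes=c_V\cdot c_W$; and a second lemma shows that the canonical map $W\otimes V^*\to T$, $w\otimes f\mapsto (v\mapsto f(v)w)$, is an isomorphism of $c$-representations. Composing gives the proposition with $c_T=c_{V^*}\cdot c_W=c_V^{-1}\cdot c_W$.

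Your route is shorter and entirely self-contained. The paper's route is slightly longer but has a structural payoff: the identification $T\simeq W\otimes V^*$ is not incidental, since the case $T\simeq V\otimes V^*$ is exactly what is exploited in Lemma~\ref{lemma: isoend} to relate $\End(A_K)$ to $\varrho_V\otimes\varrho_V^*$ and thereby compute $\Tr\varrho_R$. So the paper is setting up machinery it will reuse, whereas your proof establishes the proposition in isolation.
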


Before proving the proposition we show a particular case. In the case that $W$ is~$k$ equipped with the trivial action of $G$, let us write $V^*=T$ and $\varrho^*=\varrho_T$. In this case, $\varrho^*(\sigma)$ is the inverse transpose of $\varrho_V(\sigma)$. The assertion of the proposition is then immediate from \eqref{equation: crep}. 

The following two lemmas, whose proof is straightforward, imply the proposition.

\begin{lemma}
The maps 
$$
\varrho_{\otimes }:G \rightarrow \GL(V \otimes W)\,,
$$
defined by $\varrho_\otimes(\sigma)(v \otimes w)=\varrho_V(\sigma)(v) \otimes \varrho_W(\sigma)(w) $ and $c_{\otimes}=c_V\cdot c_W$ endow $V\otimes W$ with a structure of $c$-representation.
\end{lemma}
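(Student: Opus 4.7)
The statement is essentially a bilinear bookkeeping exercise: check that the formula $\varrho_\otimes(\sigma)(v\otimes w)=\varrho_V(\sigma)v\otimes\varrho_W(\sigma)w$ defines an element of $\GL(V\otimes W)$, that it sends $1\in G$ to the identity, and that its composition on successive pairs $(\sigma,\tau)$ fails to be multiplicative by exactly the scalar $c_V(\sigma,\tau)c_W(\sigma,\tau)$. So the plan is just to unwind the definitions carefully, with the main conceptual point being that the cocycle‑type factors on the two tensor slots combine multiplicatively because the group acts diagonally.

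First I would define $\varrho_\otimes(\sigma)$ on pure tensors by the given rule and note that, since the assignment $(v,w)\mapsto \varrho_V(\sigma)v\otimes\varrho_W(\sigma)w$ is $k$-bilinear, it extends uniquely to a $k$-linear endomorphism of $V\otimes W$. Because $\varrho_V(\sigma)\in\GL(V)$ and $\varrho_W(\sigma)\in\GL(W)$, this endomorphism is the tensor product of two invertible maps, hence lies in $\GL(V\otimes W)$. The identity $\varrho_\otimes(1)=1$ then follows immediately from $\varrho_V(1)=1$ and $\varrho_W(1)=1$.

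Next I would verify the cocycle relation \eqref{equation: crep} for $\varrho_\otimes$. Evaluating $\varrho_\otimes(\sigma)\varrho_\otimes(\tau)$ on a pure tensor $v\otimes w$ and applying the $c$-representation relations for $\varrho_V$ and $\varrho_W$ separately on each factor yields, in one line,
\[
\varrho_V(\sigma\tau)v\otimes \varrho_W(\sigma\tau)w \ \cdot\ c_V(\sigma,\tau)c_W(\sigma,\tau),
\]
which is $c_V(\sigma,\tau)c_W(\sigma,\tau)\cdot\varrho_\otimes(\sigma\tau)(v\otimes w)$. By $k$-linearity this extends to all of $V\otimes W$, giving exactly the stated identity with $c_\otimes = c_V\cdot c_W$.

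There is no real obstacle here; the only thing to be mindful of is that the purported scalar $c_\otimes(\sigma,\tau)=c_V(\sigma,\tau)c_W(\sigma,\tau)$ lands in $k^\times$, which is automatic since both $c_V$ and $c_W$ do by hypothesis. The well‑definedness on the tensor product (as opposed to on pure tensors only) is handled in the standard way via the universal property, so the lemma reduces to the one-line diagonal computation above.
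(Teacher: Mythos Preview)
Your proof is correct and is precisely the straightforward verification the paper has in mind; the paper does not actually write out a proof, merely stating that the lemma ``is straightforward.'' There is nothing to add.
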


\begin{lemma}
The map
$$
\phi: W\otimes V^* \rightarrow T
$$
defined by $\phi(w\otimes f)(v)= f(v)w$ is an isomorphism of $c$-representations.

\end{lemma}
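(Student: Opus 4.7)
The plan is to verify two things: (i) that $\phi$ is a $k$-linear isomorphism of vector spaces, and (ii) that it satisfies the equivariance $\phi \circ \varrho_{W\otimes V^*}(\sigma) = \varrho_T(\sigma) \circ \phi$ for every $\sigma \in G$. Note that, by the preceding tensor lemma and the special case $V^* = \Hom(V,k)$ of the proposition (which was already established by direct inspection), the $c$-representations $W\otimes V^*$ and $T$ share the cocycle $c_W\cdot c_V^{-1}$, so the equivariance condition makes sense as a strict equality, which is the natural notion of isomorphism of $c$-representations in this setting.

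For (i), I would proceed in the classical way: choosing a basis $\{v_i\}$ of $V$ with dual basis $\{v_i^*\}\subset V^*$, any $h\in T$ is visibly the image under $\phi$ of $\sum_i h(v_i)\otimes v_i^*$, and a dimension count (both sides have dimension $\dim(V)\cdot\dim(W)$) then gives bijectivity. This is the only place where the finite-dimensionality of $V$ is used.

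For (ii), the verification reduces to evaluating both sides of the equivariance identity on an elementary tensor $w\otimes f$ at a vector $v\in V$. On the left-hand side, unpacking $\varrho^*(\sigma)(f)(v) = f(\varrho_V(\sigma)^{-1}v)$ from the definition of $\varrho^*$ on $V^*$, one obtains
\[
\phi\bigl(\varrho_{W\otimes V^*}(\sigma)(w\otimes f)\bigr)(v) = \phi\bigl(\varrho_W(\sigma)(w)\otimes \varrho^*(\sigma)(f)\bigr)(v) = f(\varrho_V(\sigma)^{-1}v)\cdot \varrho_W(\sigma)(w),
\]
while on the right-hand side
\[
\varrho_T(\sigma)\bigl(\phi(w\otimes f)\bigr)(v) = \varrho_W(\sigma)\bigl(\phi(w\otimes f)(\varrho_V(\sigma)^{-1}v)\bigr) = \varrho_W(\sigma)\bigl(f(\varrho_V(\sigma)^{-1}v)\cdot w\bigr),
\]
which matches the previous expression by $k$-linearity of $\varrho_W(\sigma)$.

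There is no genuine obstacle here; the authors themselves label the lemma as ``straightforward''. The one point that merits attention is the cocycle bookkeeping, already handled by the preceding lemma together with the contragredient identity $c_{V^*}=c_V^{-1}$, so that no stray scalar factor can intrude into the intertwining computation. Once (i) and (ii) are in hand, the two preceding lemmas together yield the proposition, since they identify $(T,\varrho_T)$ with the $c$-representation $(W\otimes V^*,\varrho_\otimes)$ of cocycle $c_W\cdot c_V^{-1} = c_V^{-1}\cdot c_W$.
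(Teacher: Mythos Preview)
Your proof is correct and is precisely the natural verification the authors had in mind; the paper itself omits the proof entirely, declaring the lemma ``straightforward'' alongside the preceding tensor lemma. Your careful check of the equivariance on elementary tensors and the cocycle bookkeeping ($c_{W\otimes V^*}=c_W\cdot c_V^{-1}=c_T$) is exactly what is needed, and nothing more.
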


\begin{corollary}\label{equation: creprep}
When $V=W$, the $c$-representation $T$ is in fact a representation.
\end{corollary}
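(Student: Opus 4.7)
The plan is to invoke Proposition~\ref{equation: homcrep} directly and observe that the two cocycles cancel.

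First, I would take the $c$-representation structure on $T=\Hom(V,W)$ supplied by Proposition~\ref{equation: homcrep}, whose cocycle is $c_T = c_V^{-1}\cdot c_W$. Specializing to the case $W=V$ (so $\varrho_W = \varrho_V$ and $c_W = c_V$), this formula collapses to $c_T = c_V^{-1}\cdot c_V = 1$, the constant cocycle. By Remark~\ref{remark: inverse}(ii), a $c$-representation with trivial cocycle is an ordinary representation, so this already gives the result.

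Since Proposition~\ref{equation: homcrep} has not yet been proved at this point of the paper (its proof is factored through the two lemmas immediately preceding the corollary), I would in fact verify the multiplicativity $\varrho_T(\sigma)\varrho_T(\tau) = \varrho_T(\sigma\tau)$ in situ to make the statement self-contained. Given $f\in \End(V)$ and $v\in V$, I would compute
\[
(\varrho_T(\sigma)\varrho_T(\tau)f)(v) = \varrho_V(\sigma)\bigl((\varrho_T(\tau)f)(\varrho_V(\sigma)^{-1}v)\bigr) = \varrho_V(\sigma)\varrho_V(\tau)\bigl(f(\varrho_V(\tau)^{-1}\varrho_V(\sigma)^{-1}v)\bigr),
\]
and use \eqref{equation: crep} together with the identity $\varrho_V(\tau)^{-1}\varrho_V(\sigma)^{-1} = \varrho_V(\sigma\tau)^{-1}\, c_V(\sigma,\tau)$ (which follows by inverting both sides of \eqref{equation: crep}). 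The scalars $c_V(\sigma,\tau)$ on the left and its inverse on the right cancel, leaving exactly $\varrho_V(\sigma\tau)\bigl(f(\varrho_V(\sigma\tau)^{-1}v)\bigr) = (\varrho_T(\sigma\tau)f)(v)$. Together with $\varrho_T(1) = 1$, this shows $\varrho_T$ is an honest group homomorphism.

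There is no real obstacle here: the only subtlety worth flagging is the consistent use of the identity $\varrho_V(\sigma)^{-1}\varrho_V(\tau)^{-1} = \varrho_V(\tau\sigma)^{-1} c_V(\tau,\sigma)$, so that the factors $c_V$ contributed by the two instances of the inverse-and-multiply step combine with the factor coming from $\varrho_V(\sigma)\varrho_V(\tau)$ in a way that produces $c_V\cdot c_V^{-1} = 1$ rather than something nontrivial. Once one is careful with the order of the arguments, the cancellation is automatic.
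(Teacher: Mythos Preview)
Your first paragraph is exactly the paper's (implicit) argument: the corollary is immediate from Proposition~\ref{equation: homcrep}, since $c_T=c_V^{-1}\cdot c_V=1$, and Remark~\ref{remark: inverse}(ii) finishes. Two small corrections to the optional material that follows: first, Proposition~\ref{equation: homcrep} \emph{is} proved before the corollary (the two lemmas immediately preceding the corollary constitute its proof), so the in-situ verification is unnecessary; second, in that verification, inverting \eqref{equation: crep} gives $\varrho_V(\tau)^{-1}\varrho_V(\sigma)^{-1}=\varrho_V(\sigma\tau)^{-1}c_V(\sigma,\tau)^{-1}$, not $c_V(\sigma,\tau)$---with this sign fixed, your cancellation goes through exactly as you describe.
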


\subsection{$k$-curves: general definitions}\label{section: kcurvegendef}
We briefly recall some definitions and results regarding $\Q$-curves and, more generally, $k$-curves with complex multiplication. More details can be found in \cite[\S2.1]{FG18} and the references therein (especially \cite{Quer}, \cite{RiAVQ}, and \cite{Nak04}).

Let $E/\Qbar$ be an elliptic curve and let $k$ be a number field, whose absolute Galois group we denote by $G_k$.
\begin{definition}
  We say that $E$ is a \emph{$k$-curve} if for every $\sigma\in G_k$ there exists an isogeny $\mu_\sigma: \acc\sigma E \ra E$.
\end{definition}

\begin{definition}
  We say that $E$ is a \emph{Ribet $k$-curve} if $E$ is a $k$-curve and the isogenies $\mu_\sigma$ can be taken to be compatible with the endomorphisms of $E$, in the sense that the following diagram
\begin{align}\label{eq: compatibility}
  \xymatrix{
    \acc\sigma E \ar[d]^{\acc\sigma \varphi} \ar[r]^{\mu_\sigma} &E \ar[d]^\varphi\\
   \acc\sigma E \ar[r]^{\mu_\sigma}
&E}
\end{align}
commutes for all $\sigma\in G_k$ and all $\varphi\in \End(E)$. 
\end{definition}

\begin{remark}
\begin{enumerate}[i)]
\item Observe that if $E$ does not have CM, then $E$ is a $k$-curve if and only if it is a Ribet $k$-curve. If $E$ has CM (say by a quadratic imaginary field~$M$), it is well known that $E$ is isogenous to all of its Galois conjugates and hence it is always a $k$-curve; it is a Ribet $k$-curve if and only if $M\subseteq k$ (cf. \cite[Theorem 2.2]{silverman}).
\item We warn the reader that in the present paper we are using a slightly different terminology from that of \cite{FG18}: as in \cite{FG18} the only relevant notion was that of a Ribet $k$-curve, we called Ribet $k$-curves simply $k$-curves.
\end{enumerate}
\end{remark}
 Let $K$ be a number field containing $k$. We say that an elliptic curve $E/K$ is a $k$-curve \emph{defined over $K$} (resp. a Ribet $k$-curve \emph{defined over $K$}) if $E_\Qbar$ is a $k$-curve (resp. a Ribet $k$-curve). We will say that $E$ is \emph{completely defined over~$K$} if, in addition, all the isogenies $\mu_\sigma\colon \acc\sigma E \ra E$ can be taken to be defined over~$K$.

\begin{definition}\label{def: Gross Q-curve}
Let $H$ denote the Hilbert class field of $M$ and let $E/H$ be an elliptic curve with CM by $M$. We say that $E$ is a \emph{Gross $\Q$-curve} if $E$ is completely defined over $H$.
\end{definition}

The next proposition characterizes the existence of Gross $\Q$-curves and Ribet $M$-curves with CM by $M$ defined over the Hilbert class field $H$.

\begin{proposition}\label{proposition: McurveGrossCurve}
Let $M$ be a quadratic imaginary field and let $D$ denote its discriminant. Then:
\begin{enumerate}[i)]
\item There exists a Ribet $M$-curve $E^*$ with CM by $M$ and completely defined over~$H$.
\item There exists a Gross $\Q$-curve $E^*$ with CM by $M$ (and completely defined over $H$) if and only if $D$ is not of the form
\begin{equation}\label{equation: exceptional}
D=-4p_1 \dots p_{t-1}\,,
\end{equation}
where $t\geq 2$ and $p_1,\dots, p_{t-1}$ are primes congruent to $1$ modulo 4.
\end{enumerate}
\end{proposition}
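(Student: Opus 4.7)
For part~(i), I would take any elliptic curve $E^*/H$ with $\End(E^*)=\cO_M$; such a curve exists, with $j$-invariant equal to the class invariant $j(\cO_M)\in H$. By the main theorem of complex multiplication, the action of $\Gal(H/M)$ on the isomorphism classes of such curves matches the canonical ideal-class action of $\mathrm{Cl}(M)$: for $\sigma\in\Gal(H/M)$ corresponding to $[\mathfrak a]\in\mathrm{Cl}(M)$ under the reciprocity isomorphism, there is an $H$-rational isogeny $\mu_\sigma\colon\acc{\sigma}{E^*}\to E^*$ of degree $N(\mathfrak a)$ coming from the canonical $\mathfrak a$-isogeny. Because $M\subseteq H$ forces $\End_H(E^*)=\End_{\Qbar}(E^*)=\cO_M$, each $\mu_\sigma$ is automatically $\cO_M$-linear and the Ribet compatibility diagram commutes for every $\varphi\in\End(E^*)$. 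Hence $E^*$ is a Ribet $M$-curve completely defined over $H$.

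For part~(ii), I would follow the classical strategy of Gross and translate the existence of a Gross $\Q$-curve into the existence of a Hecke character of $M$ with prescribed local type. A Gross $\Q$-curve $E^*/H$ with CM by $\cO_M$ corresponds, through its Hecke $L$-function, to a continuous character $\psi\colon\A_M^\times/M^\times\to\CC^\times$ of infinity type $(1,0)$ satisfying $\psi\cdot\bar\psi=N_{M/\Q}$ and whose conductor is supported on the primes ramified in $M/\Q$; this correspondence is bijective. The existence question for $E^*$ therefore reduces to the existence of such a~$\psi$.

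The existence of $\psi$ is a local--global problem that I would attack via genus theory. Decompose the Kronecker character $\chi_M$ as $\chi_M=\prod_{p\mid D}\chi_p$ into genus characters at the ramified primes. Extending each $\chi_p$ to a local character of $M_\fp^\times$ with the required infinity type introduces a local sign: the sign is rigid at $p\equiv 3\pmod 4$ and, in a precise sense, at $p=2$, while at each $p\equiv 1\pmod 4$ it is free. Assembling the local characters into a global Hecke character requires the product of the signs to satisfy a parity identity coming from triviality on $M^\times$. A direct computation---carried out explicitly in \cite{Nak04}---shows that this identity is solvable unless $D=-4p_1\cdots p_{t-1}$ with $t\ge 2$ and every $p_i\equiv 1\pmod 4$, in which case the rigid contribution from the factor $4$ has the wrong parity to be compensated by any choice at the $p_i$. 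Read constructively, the same calculation produces an explicit $\psi$ (and hence a Gross $\Q$-curve) for each non-exceptional~$D$.

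I expect the \emph{main obstacle} to be this local--global sign calculation: while the reduction to Hecke characters is essentially formal, pinpointing the sharp discriminant criterion requires a careful case analysis at each ramified prime, distinguishing $p\equiv 1$ from $p\equiv 3$ modulo $4$ and treating $p=2$ separately.
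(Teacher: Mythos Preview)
Your proposal is correct and follows precisely the line of argument in the references the paper cites: the paper does not prove this proposition at all, but simply points to \cite[Proposition~5, p.~521]{Sh71} for part~(i) and to \cite[\S11]{Gro80} and \cite[Proposition~5]{Nak04} for part~(ii). Your sketch for (i) is the content of Shimura's result (the $H$-rationality of the $\mathfrak a$-isogenies is the substantive point, and the $\cO_M$-linearity then follows as you say since $\sigma$ fixes $M$), and your Hecke-character reduction and local sign analysis for (ii) is exactly Gross's argument as refined by Nakamura. So you are not diverging from the paper; you are unpacking what it cites.
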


The first part of the previous proposition is a weaker form of \cite[Proposition 5, p. 521]{Sh71} (see also \cite[Remark 1]{Nak01}). For the second part, we refer to \cite[\S11]{Gro80} and \cite[Proposition 5]{Nak04}.  Discriminants of the form \eqref{equation: exceptional} are called \emph{exceptional}.

Suppose from now on that $E$ is a $k$-curve defined over $K$ with CM by an imaginary quadratic field $M$. Fix a system of isogenies $\{\mu_\sigma:\acc\sigma E \ra E\}_{\sigma\in G_k}$. By enlarging~$K$ if necessary, we can always assume that $K/k$ is Galois and that~$E$ is completely defined over~$K$. We will equip $\End(E)$ with the following action. For $\sigma \in \Gal(K/k)$ and $\varphi \in \End(E)$ define 
$$
\sigma \star \varphi=\mu_{\sigma} \circ {}^{\sigma}\varphi \circ \mu_{\sigma}^{-1}\,.
$$
Note that if $E$ is a Ribet $k$-curve, then this action is trivial.
If we regard $M$ as a $\Gal(K/k)$-module by means of the natural Galois action (which is actually the trivial action when $k$ contains $M$) and $\End(E)$ endowed with the action defined above, then the identification of $\End(E)$ with $M$ becomes a $\Gal(K/k)$-equivariant isomorphism. The map 
\begin{align*}
\begin{array}{cccc}
  c_E^K\colon & \Gal(K/k)\times \Gal(K/k)&\lra & M^\times \\
  & (\sigma,\tau)&\longmapsto & \mu_{\sigma\tau}\circ \acc\sigma\mu_\tau^{-1}\circ \mu_\sigma^{-1}
\end{array}
\end{align*}
satisfies the condition
\begin{equation}\label{twoccocycle}
(\varrho \star c_E^K(\sigma, \tau))\cdot  c_E^K(\varrho\sigma,\tau)^{-1}\cdot  c_E^K(\varrho,\sigma \tau) \cdot c_E^K(\varrho,\sigma)^{-1}= 1,
\end{equation}
for $\varrho,\sigma,\tau \in \Gal(K/k)$, and is then a $2$-cocycle\footnote{Actually, this is the inverse of the cocycle considered in \cite{FG18}, but this does not affect any of the results that we will use.}. Denote by $\gamma_E^K$  the cohomology class in $H^2(\Gal(K/k),M^\times)$ corresponding to $c_E^K$. The class $\gamma_E^K$ only depends on the $K$-isogeny class of $E$. 


The next result is a consequence of Weil's descent criterion, extended to varieties up to isogeny by Ribet (\cite[\S8]{RiAVQ}).
\begin{theorem}[Ribet--Weil]\label{theorem: Weilsdescend} 
 Suppose that $E$ is a Ribet $k$-curve completely defined over $K$ (and hence $M\subseteq k$). Let $L$ be a number field with $k\subseteq L \subseteq K$, and consider the restriction map
\begin{align*}
  \res\colon H^2(\Gal(K/k),M^\times)\lra H^2(\Gal(K/L),M^\times).
\end{align*}
If $\res(\gamma_E^K)= 1$, there exists an elliptic curve $C/L$ such that $E\sim C_K$.
\end{theorem}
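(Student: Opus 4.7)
The natural approach is a direct application of the Weil--Ribet descent criterion, after first massaging the system of isogenies $\{\mu_\sigma\}_{\sigma \in \Gal(K/k)}$ using the trivialization $\beta$ of the restricted cocycle. The plan is to replace $\mu_\sigma$ by $\tilde\mu_\sigma := \beta(\sigma)^{-1}\circ\mu_\sigma$ for $\sigma \in \Gal(K/L)$, check that this produces a strict $1$-cocycle of isogenies, and then quote Ribet's extension of Weil descent to conclude.

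First, I would translate the hypothesis. Because $E$ is a Ribet $k$-curve and $M\subseteq k$, both the starred action and the natural Galois action on $\End(E)\simeq M$ are trivial, so $H^2(\Gal(K/L),M^\times)$ is computed with trivial coefficients. The hypothesis $\res(\gamma_E^K)=1$ therefore furnishes a $1$-cochain
\[
\beta\colon \Gal(K/L)\lra M^\times
\qquad\text{with}\qquad
c_E^K(\sigma,\tau)=\beta(\sigma)\,\beta(\tau)\,\beta(\sigma\tau)^{-1}
\]
for every $\sigma,\tau\in \Gal(K/L)$. Next I would define $\tilde\mu_\sigma := \beta(\sigma)^{-1}\circ\mu_\sigma\colon \acc\sigma E\to E$ (viewing $\beta(\sigma)^{-1}\in M^\times$ as an element of $\End(E)$). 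Using the definition of $c_E^K$, the cochain relation on $\beta$, and the crucial fact that the Ribet condition makes the action $\sigma\star$ on $M$ trivial (so that $\mu_\sigma\circ \acc\sigma\lambda = \lambda\circ \mu_\sigma$ for every $\lambda\in M$), a direct computation yields the strict $1$-cocycle identity
\[
\tilde\mu_{\sigma\tau}=\tilde\mu_\sigma\circ \acc\sigma\tilde\mu_\tau,
\qquad \sigma,\tau\in \Gal(K/L).
\]

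Finally, I would appeal to Ribet's extension of Weil's descent criterion to the category of abelian varieties up to isogeny (\cite[\S8]{RiAVQ}): such a compatible family of isogenies $\{\tilde\mu_\sigma\}$ gives a descent datum for $E$, producing an abelian variety $C$ defined over $L$ with an isogeny over $K$ from $C_K$ to $E$. Since $\dim C=1$, $C$ is an elliptic curve, and by construction $E\sim C_K$, as required. The principal subtlety to be careful about is the bookkeeping in the middle step: one must verify that the twist by $\beta$ genuinely kills the cocycle without leaving behind a residual scalar, which is where the hypothesis $M\subseteq k$ (giving trivial Galois action on $\beta$) and the Ribet condition (giving the commutation of $\mu_\sigma$ with Galois conjugates of $M$) work together. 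Once this is checked, the remainder of the proof is the formal invocation of Weil--Ribet descent.
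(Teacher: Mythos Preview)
Your proposal is correct and follows the standard argument. Note that the paper does not actually supply a proof of this statement: it is recorded as a consequence of Weil's descent criterion, as extended by Ribet to the isogeny category (\cite[\S8]{RiAVQ}), and left at that. Your sketch fills in precisely the expected details---trivialize the restricted $2$-cocycle by a $1$-cochain $\beta$, twist the isogenies to obtain a genuine descent datum, and invoke Ribet--Weil---so there is nothing to compare beyond the fact that you have spelled out what the paper only cites. One small bookkeeping point: with the paper's convention $c_E^K(\sigma,\tau)=\mu_{\sigma\tau}\circ\acc\sigma\mu_\tau^{-1}\circ\mu_\sigma^{-1}$, the coboundary relation you need is $c_E^K(\sigma,\tau)=\beta(\sigma\tau)\beta(\sigma)^{-1}\beta(\tau)^{-1}$ rather than the one you wrote, but since $\beta$ may be replaced by $\beta^{-1}$ this is immaterial.
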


\subsection{$M$-curves from squares of CM elliptic curves}\label{section: QcurveCMsquare}

Let $M$ be a quadratic imaginary field. Let $A$ be an abelian surface defined over~$\Q$ such that $A_\Qbar$ is isogenous to $E^2$, where $E$ is an elliptic curve defined over $\Qbar$ with CM by~$M$. Let $K/\Q$ denote the minimal extension over which
$$
\End(A_\Qbar)\simeq \End(A_K)\,.
$$
By the theory of complex multiplication, $K$ contains the Hilbert class field~$H$ of~$M$. Note also that $K/\Q$ is Galois and the possibilities for $\Gal(K/\Q)$ can be read from \cite[Table 8]{FKRS12}. For our purposes, it is enough to recall that
\begin{equation}\label{equation: KoverM}
\Gal(K/M)\simeq \begin{cases}
\cyc{r} & \text{for }r\in \{1,2,3,4,6\},\\
\dih{r} & \text{for }r\in \{2,3,4,6\},\\
 A_4, S_4\,.
 \end{cases}
\end{equation}
Here, $\cyc r$ denotes the cyclic group of $r$ elements, $\dih r$ denotes the dihedral group of $2r$ elements, and $A_4$ (resp. $S_4$) stands for the alternating (resp. symmetric) group on $4$ letters.

We can (and do) assume that $E$ is in fact defined over $K$, and then we have that $A_K\sim E^2$. For $\sigma\in \Gal(K/\Q)$ we have that $({}^\sigma E)^2\sim {}^\sigma A_K=A_K \sim E^2 $. Therefore, Poincar\'e's decomposition theorem implies that $E$ is a $\Q$-curve completely defined over $K$. 

For the purposes of this article, we need to consider the following (slightly more general) situation: Let $N/M$ be a Galois subextension of $K/M$, and let~$E^*$ be a Ribet $M$-curve which is completely defined over~$N$ and such that $E_\Qbar\sim E^*_\Qbar$. Observe that there always exist $N$ and $E^*$ satisfying these conditions, for example by taking $N=K$ and $E^*=E$; but in \S\ref{section: crepCMsq} and \S\ref{section: obstructions}  below we will exploit certain situations where $N\subsetneq K$ and $E^*\neq E$.

Then we can consider two cohomology classes: the class $\gamma_E^K$ attached to $E$, and the class $\gamma_{E^*}^N$ attached to $E^*$. We recall the following key result about $\gamma_E^K$, which is a particular case of \cite[Corollary 2.4]{FG18}.
\begin{theorem}\label{th: 2-torsion}
The cohomology class $\gamma_E^K$ is $2$-torsion.  
\end{theorem}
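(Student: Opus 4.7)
My plan is to exhibit a natural self-duality $V\simeq V^*$ of the $c$-representation attached to $V:=\Hom(E, A_K)$ and then use the identity $c_{V^*} = c_V^{-1}$ from Proposition~\ref{equation: homcrep} to conclude that $c_V^2$ is a coboundary.

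To begin, I would set up the $c$-representation. Since $A_K\sim E^2$ and $\End(E)\simeq M$, the space $V=\Hom(E,A_K)$ is $2$-dimensional over $M$. For $\sigma\in\Gal(K/\Q)$, define $\varrho_V(\sigma)(f):= {}^\sigma f\circ \mu_\sigma^{-1}$; this is well-defined because $A/\Q$ yields ${}^\sigma A_K = A_K$. A direct computation using the defining formula $c_E^K(\sigma,\tau) = \mu_{\sigma\tau}\circ {}^\sigma \mu_\tau^{-1}\circ \mu_\sigma^{-1}$ gives
\[
\varrho_V(\sigma)\varrho_V(\tau)(f) \;=\; \varrho_V(\sigma\tau)(f)\circ c_E^K(\sigma,\tau),
\]
so $\varrho_V$ is a $c$-representation whose cocycle $c_V$ represents $\gamma_E^K$.

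Next, I would produce the self-duality from polarizations. Fix a polarization $\lambda_A\colon A\to A^\vee$ defined over $\Q$ and a polarization $\lambda_E\colon E\to E^\vee$ defined over $K$. The $M$-bilinear pairing
\[
V\times V \;\lra\; \End(E)\simeq M,\qquad (f,g)\longmapsto \lambda_E^{-1}\circ g^\vee\circ \lambda_A\circ f,
\]
is non-degenerate, and hence induces an isomorphism $\Phi\colon V\simeq V^*$. Because $\lambda_A$ is $\Gal(K/\Q)$-equivariant while the space of polarizations of $E$ is one-dimensional over $\Q$, the failure of $\Phi$ to be $\Gal(K/\Q)$-equivariant is encoded by a $1$-cochain $\beta\colon \Gal(K/\Q)\to M^\times$ comparing ${}^\sigma \lambda_E$ (pulled back along $\mu_\sigma$) to $\lambda_E$. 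Tracing this through $\Phi$ yields the cocycle identity $c_V\cdot c_{V^*}^{-1} = \partial\beta$. Combining with $c_{V^*} = c_V^{-1}$ from Proposition~\ref{equation: homcrep} gives $c_V^2 = \partial\beta$, a coboundary, and therefore $[\gamma_E^K]^2 = 1$.

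The principal difficulty lies in reconciling two cohomological settings: the $c$-representation framework of Section~\ref{section: crepQcurve} treats cocycles valued in $k^\times$ with trivial Galois action, whereas $\gamma_E^K$ lives in $H^2(\Gal(K/\Q), M^\times)$ with the natural Galois action on $M^\times$. The two viewpoints coincide on the index-two subgroup $\Gal(K/M)$, on which the polarization argument directly proves $2$-torsion of the restriction of $\gamma_E^K$; extending the conclusion to the full group $\Gal(K/\Q)$ requires the semilinear refinement carried out in \cite[Corollary 2.4]{FG18}.
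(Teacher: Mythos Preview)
The paper does not give its own proof of this theorem: it is stated as a direct quotation of \cite[Corollary 2.4]{FG18}. So there is no in-text argument to compare against, only the external reference.

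Your self-duality strategy is natural but contains a genuine gap. The polarization pairing
\[
B(f,g)=\lambda_E^{-1}\circ g^\vee\circ\lambda_A\circ f
\]
is \emph{not} $M$-bilinear. Since the Rosati involution on $\End(E)\simeq M$ is complex conjugation, one computes $B(f,\,g\circ m)=\bar m\cdot B(f,g)$, so $B$ is Hermitian (sesquilinear). The induced map $V\to V^*$ is then $M$-\emph{antilinear}, and tracing the cocycles through an antilinear isomorphism yields
\[
c_V\cdot\overline{c_V}=\partial\beta,\qquad\text{i.e.}\qquad \gamma_E^K\cdot\overline{\gamma_E^K}=1,
\]
which is not the same as $(\gamma_E^K)^2=1$ unless one already knows $\gamma_E^K=\overline{\gamma_E^K}$. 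Separately, your last paragraph appeals to \cite[Corollary 2.4]{FG18} to handle the passage to $\Gal(K/\Q)$; but that corollary \emph{is} the theorem you are proving, so the appeal is circular.

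There is a much simpler route that avoids polarizations entirely and is the idea behind \cite[Corollary 2.4]{FG18}. Because $A_K\sim E^2$, the $M$-vector space $V=\Hom(E,A_K)$ is $2$-dimensional. Taking determinants in the $c$-representation identity $\varrho_V(\sigma)\varrho_V(\tau)=c_E^K(\sigma,\tau)\,\varrho_V(\sigma\tau)$ gives
\[
c_E^K(\sigma,\tau)^2=\det\varrho_V(\sigma)\,\det\varrho_V(\tau)\,\det\varrho_V(\sigma\tau)^{-1}=(\partial\det\varrho_V)(\sigma,\tau),
\]
so $(c_E^K)^2$ is a coboundary and $\gamma_E^K$ has order dividing $2$. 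This argument works verbatim over $\Gal(K/M)$, which is all the paper ever uses (see the decomposition \eqref{eq: dec of 2-torsion} and the applications in \S\ref{section: obstructions}); the more general statement in \cite{FG18} handles the twisted coefficients over the full group.
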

Denote by $U$ the set of roots of unity of $M$ and put $P = M^\times/U$. The same argument of \cite[Proof of Theorem 2.14]{FG18} proves the following decomposition of the $2$-torsion of $H^2(\Gal(K/M),M^\times)$:
\begin{align}
  H^2(\Gal(K/M),M^\times)[2]\simeq H^2(\Gal(K/M),U)[2]\times \Hom(\Gal(K/M),P/P^2)\,.
\end{align}
If $M\neq \Q(i),\Q(\sqrt{-3})$ this particularizes to
\begin{align}\label{eq: dec of 2-torsion}
  H^2(\Gal(K/M),M^\times)[2]\simeq H^2(\Gal(K/M),\{\pm 1\})\times \Hom(\Gal(K/M),P/P^2).
\end{align}
For $\gamma\in  H^2(\Gal(K/M),M^\times)[2]$ we will denote by $(\gamma_\pm,\bar\gamma)$ its components under the isomorphism  \eqref{eq: dec of 2-torsion}; we will refer to $\gamma_\pm$ as the sign component and to $\bar\gamma$ as the degree component.

In order to study the relation between $\gamma_E^K$ and $\gamma_{E^*}^N$, define $L/K$ to be the smallest extension such that $E^*_L$ and $E_L$ are isogenous. Since all the endomorphisms of $E$ are defined over $K$, this is also the smallest extension $L/K$ such that $\Hom(E^*_L,E_L)=\Hom(E^*_\Qbar,E_\Qbar)$. The extension $L/\Q$ is Galois. Indeed, for $\sigma\in G_\Q$  put $L'=\acc\sigma L$ and let $\beta_\sigma\colon \acc\sigma E^* \ra E^*$ and $\mu_\sigma\colon \acc\sigma E\ra E$ be isogenies defined over $N$ and over $K$ respectively; then, if $\phi\colon E^*_L\ra E_L$ is an isogeny defined over $L$ we find that $\mu_\sigma\circ \acc\sigma\phi\circ \beta_\sigma^{-1}$ is an isogeny defined over $L'$ between $E^*_{L'}$ and $E_{L'}$, so that $L\subseteq L'$ and therefore $L=L'.$ 

One can also characterize $L/K$ as the minimal extension such that
$$
\Hom(E^*_\Qbar,A_\Qbar)\simeq \Hom(E^*_L,A_L)\,.
$$

Denote by
\begin{align*}
  \inf_N^K: H^2(\Gal(N/M), M^\times)\lra H^2(\Gal(K/M),M^\times) 
\end{align*}
 the inflation map in Galois cohomology. 
\begin{lemma}\label{lemma: inflations}
Suppose that $M\neq \Q(i),\Q(\sqrt{-3})$. Then
\begin{align*}
\inf_N^K(\gamma_{E^*}^N) = w\cdot\gamma_E^K,  
\end{align*}
for some $w\in H^2(\Gal(K/M),\{\pm 1\})$.
\end{lemma}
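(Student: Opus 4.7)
The plan is to introduce a 1-cochain $\lambda$ on $\Gal(L/M)$ which trivialises $c_E^K/c_{E^*}^K$ after inflation to $L$, and then to apply the inflation--restriction sequence together with the fact that, under the hypothesis on $M$, the torsion subgroup of $M^\times$ is $\{\pm 1\}$.

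Fix systems of isogenies $\{\mu_\sigma:\acc\sigma E \to E\}_{\sigma\in\Gal(K/M)}$ and $\{\beta_\sigma:\acc\sigma E^* \to E^*\}_{\sigma\in\Gal(N/M)}$ realising the cocycles $c_E^K$ and $c_{E^*}^N$, together with an isogeny $\phi\colon E^*_L\to E_L$ defined over $L$. Since $E^*$ is completely defined over $N\subseteq K$, one may regard $\{\beta_\sigma\}$ as being indexed by $\Gal(K/M)$ via $\sigma\mapsto\beta_{\sigma|_N}$, and the corresponding cocycle is $c_{E^*}^K:=\inf_N^K c_{E^*}^N$. Inflating further to $\Gal(L/M)$ by the same device, set
\[
\lambda_\sigma := \phi^{-1}\circ\mu_\sigma\circ\acc\sigma\phi\circ\beta_\sigma^{-1} \in \End(E^*_L)^\times = M^\times.
\]
Using the $M$-linearity of $\phi$, $\mu_\sigma$ and $\beta_\sigma$ (which follows from the CM structure) together with the Ribet $M$-curve conditions for both curves (so that $\mu_\sigma\circ\acc\sigma\alpha=\alpha\circ\mu_\sigma$ and analogously for $\beta_\sigma$, for every $\alpha\in M$), a careful expansion of $\mu_{\sigma\tau}\circ\acc{\sigma\tau}\phi\circ\beta_{\sigma\tau}^{-1}$ in two different ways yields the key identity
\[
\lambda_{\sigma\tau} = c_E^K(\sigma,\tau)\cdot c_{E^*}^K(\sigma,\tau)^{-1}\cdot\lambda_\sigma\lambda_\tau,\qquad \sigma,\tau\in\Gal(L/M).
\]

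This identity expresses the inflation of $c_E^K\cdot(c_{E^*}^K)^{-1}$ from $\Gal(K/M)$ to $\Gal(L/M)$ as the coboundary of $\lambda^{-1}$, so it is cohomologically trivial in $H^2(\Gal(L/M),M^\times)$. The inflation--restriction five-term sequence then shows that the class $[c_E^K]\cdot[c_{E^*}^K]^{-1}\in H^2(\Gal(K/M),M^\times)$ is the transgression of a character in $\Hom(\Gal(L/K),M^\times)^{\Gal(K/M)}$; by construction this character is $\lambda|_{\Gal(L/K)}$.

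Specialising the key identity to $\sigma,\tau\in\Gal(L/K)$ gives $\lambda_{\sigma\tau}=\lambda_\sigma\lambda_\tau$, because both cocycles vanish there as they are inflated from $\Gal(K/M)$. Hence $\lambda|_{\Gal(L/K)}$ is a group homomorphism from the finite group $\Gal(L/K)$ to $M^\times$, so its image is contained in the torsion subgroup of $M^\times$, which equals $\{\pm1\}$ under the assumption $M\neq\Q(i),\Q(\sqrt{-3})$. Functoriality of transgression in the coefficient module then places $[c_E^K]\cdot[c_{E^*}^K]^{-1}$ in the image of $H^2(\Gal(K/M),\{\pm1\})\to H^2(\Gal(K/M),M^\times)$, yielding the lemma with $w:=\inf_N^K(\gamma_{E^*}^N)\cdot(\gamma_E^K)^{-1}$. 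The principal technical obstacle is the verification of the displayed identity for $\lambda_{\sigma\tau}$: it requires careful bookkeeping of how the Galois conjugation on isogenies interacts both with the $M$-linearity coming from the CM structure and with the Ribet property of each of the two curves.
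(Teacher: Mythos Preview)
Your proof is correct and follows essentially the same architecture as the paper's: both arguments establish that $\inf_K^L(\gamma_E^K)=\inf_K^L(\inf_N^K\gamma_{E^*}^N)$ in $H^2(\Gal(L/M),M^\times)$, then invoke the inflation--restriction sequence and the identification $H^1(\Gal(L/K),M^\times)\simeq\Hom(\Gal(L/K),\{\pm1\})$ (valid since $M\neq\Q(i),\Q(\sqrt{-3})$) to conclude. The only difference is in the first step: the paper obtains the equality of inflated classes in one line by citing the fact that $\gamma_E^L$ depends only on the $L$-isogeny class of $E$ (and $E_L\sim E^*_L$), whereas you unpack this by writing down the explicit trivialising cochain $\lambda_\sigma=\phi^{-1}\mu_\sigma\,{}^\sigma\phi\,\beta_\sigma^{-1}$ and verifying the coboundary identity directly. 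Your version has the minor bonus of identifying the transgressed character as $\sigma\mapsto\phi^{-1}\circ{}^\sigma\phi$ on $\Gal(L/K)$, but otherwise the two proofs are the same.
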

\begin{proof}
  Since $E_L\sim (E_*)_L$ we have that 
  \begin{align}\label{eq: inf}
\inf_N^L(\gamma_{E^*}^N)=\inf_K^L(\gamma_E^K).    
  \end{align}
Now consider the following piece of the inflation--restriction exact sequence
  \begin{align}\label{eq: inf res}
    H^1(\Gal(L/K),M^\times)\stackrel{t}{\longrightarrow} H^2(\Gal(K/M), M^\times)\xrightarrow{\inf_K^L} H^2(\Gal(L/M),M^\times).
  \end{align}
Equality \eqref{eq: inf} implies that $\inf_N^K(\gamma_{E^*}^N)$ and $\gamma_E^K$ have the same image under the inflation map $\inf_K^L$, and therefore $$\inf_N^K(\gamma_{E^*}^N)=t(v)\cdot \gamma_E^K$$ for some $v\in H^1(\Gal(L/K),M^\times)$. If $M\neq \Q(i),\Q(\sqrt{-3})$ we have that 
\begin{align*}
H^1(\Gal(L/K),M^\times) \simeq \Hom(\Gal(L/K),\{\pm 1\})  
\end{align*}
and therefore $t(v)$ belongs to $H^2(\Gal(K/M),\{\pm 1\})$.
\end{proof}

Observe that from Theorem \ref{th: 2-torsion} one cannot deduce that the class $\gamma_{E^*}^N$ is $2$-torsion, since $A_N$ is not isogenous to $(E^*)^2$ in general. By Lemma \ref{lemma: inflations}, what we do deduce is that $\inf_N^K(\gamma_{E^*}^N)^2 = 1$. Therefore, once again by the inflation--restriction exact sequence
  \begin{align}\label{eq: inf res}
    H^1(\Gal(K/N),M^\times)\stackrel{t}{\lra} H^2(\Gal(N/M), M^\times)\xrightarrow{\inf_N^K}H^2(\Gal(K/M),M^\times)
  \end{align}
we have that
\begin{align}
  \label{eq: transgression}
 (\gamma_{E^*}^N)^2=t(\mu) \text{ for some } \mu \in H^1(\Gal(K/N),M^\times).
\end{align}
The following technical lemma will be used in \S\ref{section: obstructions} below.
\begin{lemma}\label{lemma: pm1}
  Suppose that $N/M$ is abelian and that $M\neq \Q(i),\Q(\sqrt{-3})$. Let $c_{E^*}^N$ be a cocycle representing the class $\gamma_{E^*}^N$. Then $c_{E^*}^N(\sigma,\tau)=\pm c_{E^*}^N(\tau,\sigma)$ for all $\sigma,\tau\in \Gal(N/M)$.
\end{lemma}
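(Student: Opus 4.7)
The plan is to encode the asymmetry of $c_{E^*}^N$ by the \emph{symmetry defect}
$$
\beta_c(\sigma,\tau):=c(\sigma,\tau)\,c(\tau,\sigma)^{-1},
$$
and to show that, for $c=c_{E^*}^N$, this defect takes values in the fourth roots of unity of $M^\times$, which under the hypothesis $M\neq\Q(i)$ equals $\{\pm 1\}$.

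First I would establish two elementary properties of $\beta_c$, valid for any $2$-cocycle $c\colon G\times G\to A$ with $G$ abelian acting trivially on an abelian group $A$. The first is that $\beta_c$ only depends on the cohomology class of $c$: a coboundary has the form $d\phi(\sigma,\tau)=\phi(\sigma)\phi(\tau)\phi(\sigma\tau)^{-1}$, which is manifestly symmetric in $\sigma,\tau$ when $G$ is abelian and $A$ is commutative, hence contributes trivially to $\beta$. The second is multiplicativity in $c$, so in particular $\beta_{c^2}=\beta_c^2$.

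Next I would exploit \eqref{eq: transgression}, which asserts that $(\gamma_{E^*}^N)^2 = t(\mu)$ for some $\mu\in H^1(\Gal(K/N),M^\times)$. Because $M\subseteq N$, the group $\Gal(K/N)$ acts trivially on $M^\times$, and so this $H^1$ identifies with $\Hom(\Gal(K/N),M^\times)$. Since $\Gal(K/N)$ is finite, the image of $\mu$ lies in the torsion subgroup of $M^\times$, which equals $\{\pm 1\}$ under the hypothesis $M\neq\Q(i),\Q(\sqrt{-3})$. Writing the transgression via a set-theoretic section $s\colon\Gal(N/M)\to\Gal(K/M)$ as $t(\mu)(\sigma,\tau)=\mu(s(\sigma)s(\tau)s(\sigma\tau)^{-1})$, one sees at once that the cocycle $t(\mu)$ takes values in $\{\pm 1\}$, and hence so does $\beta_{t(\mu)}$.

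Putting these pieces together yields
$$
\beta_c(\sigma,\tau)^2=\beta_{c^2}(\sigma,\tau)=\beta_{t(\mu)}(\sigma,\tau)\in\{\pm 1\},
$$
so $\beta_c(\sigma,\tau)^4=1$; since the fourth roots of unity in $M^\times$ reduce to $\{\pm 1\}$ when $M\neq\Q(i)$, we conclude $c(\sigma,\tau)=\pm c(\tau,\sigma)$, as required. The key conceptual step, and the only place where I expect any subtlety beyond routine cocycle bookkeeping, is recognising that the partial information ``$(\gamma_{E^*}^N)^2$ is a transgression'' from \eqref{eq: transgression} already determines the asymmetry of every representing cocycle up to sign — the abelianness of $\Gal(N/M)$ is what makes $\beta_c$ a cohomology invariant, and the exclusion $M\neq\Q(i),\Q(\sqrt{-3})$ is precisely what cuts down both the torsion of $M^\times$ and its fourth roots of unity to $\{\pm 1\}$.
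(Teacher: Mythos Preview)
Your proof is correct and follows essentially the same route as the paper's. Both arguments use \eqref{eq: transgression} together with $H^1(\Gal(K/N),M^\times)=\Hom(\Gal(K/N),\{\pm 1\})$ to see that $(c_{E^*}^N)^2$ is cohomologous to a $\{\pm 1\}$-valued cocycle, then exploit the symmetry of coboundaries over an abelian group to conclude that $\bigl(c_{E^*}^N(\sigma,\tau)/c_{E^*}^N(\tau,\sigma)\bigr)^2\in\{\pm 1\}$, and finish by noting that the only fourth roots of unity in $M^\times$ are $\pm 1$. The only difference is cosmetic: the paper writes out the coboundary $d(\sigma)d(\tau)d(\sigma\tau)^{-1}$ explicitly and cancels it by hand, whereas you package the same observation as ``the symmetry defect $\beta_c$ is a cohomology invariant''.
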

\begin{proof}
Since $M\neq \Q(i),\Q(\sqrt{-3})$ we have that
\begin{align}
  \label{eq: h1hom}
  H^1(\Gal(K/N),M^\times)=\Hom(\Gal(K/N),\{\pm 1\}).
\end{align}

 By \eqref{eq: transgression} and \eqref{eq: h1hom} we can suppose that there exists a map $d:\Gal(N/M)\ra M^\times$ such that 
 \begin{align*}
c_{E^*}^N(\sigma,\tau)^2 = d(\sigma)d(\tau)d(\sigma\tau)^{-1}\cdot t(\mu)(\sigma,\tau),
 \end{align*}
where $t(\mu)(\sigma,\tau)\in \{\pm1\}$. Therefore
\begin{align*}
c_{E^*}^N(\sigma,\tau)^2=   \pm d(\sigma)d(\tau)d(\sigma\tau)^{-1} = \pm d(\sigma)d(\tau)d(\tau\sigma)^{-1} = \pm c_{E^*}^N(\tau,\sigma)^2. 
\end{align*}
We see that $\frac{c_{E^
*}^N(\sigma,\tau)}{c_{E^
*}^N(\tau,\sigma)}$ is a root of unity in $M$, hence $\pm 1$.
\end{proof}

\subsection{$c$-representations from squares of CM elliptic curves}\label{section: crepCMsq}

Keep the notations from Section~\ref{section: QcurveCMsquare}.
 We will denote by $V$ the $M$-module $\Hom(E^*_L,A_L)$. Fix a system of isogenies $\{\mu_\sigma:\acc\sigma E^* \ra E^*\}_{\sigma\in \Gal(L/M)}$.  We do not have a natural action of $\Gal(L/M)$ on $V$, but the next lemma says that we can use the chosen system of isogenies to define a $c$-action on $V$.

\begin{lemma}\label{lemma: Qcurvecact}  The map 
$$
\varrho_V:\Gal(L/M)\rightarrow \GL(V)
$$
defined by
$$
\varrho_V(f)={}^\sigma f\circ \mu_{\sigma}^{-1} \qquad \text{for } \sigma\in \Gal(L/M),\, f\in V
$$
and the $2$-cocycle $c_{E^*}^L$ endow the module $V$ with a structure of a $c$-representation. 
 \end{lemma}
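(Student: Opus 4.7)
The plan is to verify the three defining properties of a $c$-representation directly from the formulas. The action of $\sigma \in \Gal(L/M)$ on $V$ is built from two ingredients: Galois conjugation on homomorphisms (which is defined because $A/\Q$ gives $\acc\sigma A_L = A_L$, and because $L/\Q$ is Galois so $\acc\sigma f$ lands again in $\Hom(\acc\sigma E^*_L, A_L)$), followed by precomposition with $\mu_\sigma^{-1}\colon E^*_L \to \acc\sigma E^*_L$. Note that for $\sigma\in\Gal(L/M)$ we take $\mu_\sigma$ to mean $\mu_{\sigma|_N}$ (possible because $E^*$ is defined over $N$), and similarly the cocycle $c_{E^*}^L$ is the inflation of $c_{E^*}^N$ from $\Gal(N/M)$ to $\Gal(L/M)$.

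First I would check that the image lies in $V$: given $f\in V=\Hom(E^*_L,A_L)$, the map $\acc\sigma f\circ \mu_\sigma^{-1}$ is an $L$-rational homomorphism from $E^*_L$ to $A_L$, because $\mu_\sigma$ is defined over $N\subseteq L$ and $\acc\sigma f$ is defined over $L$ (since $L/\Q$ is Galois, as established in Section~\ref{section: QcurveCMsquare}). The normalization $\varrho_V(1)=\mathrm{id}$ is immediate from the convention $\mu_1 = \mathrm{id}$.

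The heart of the argument is the cocycle relation \eqref{equation: crep}. A direct computation gives
\begin{equation*}
\varrho_V(\sigma)\bigl(\varrho_V(\tau)(f)\bigr) = \acc\sigma\bigl(\acc\tau f \circ \mu_\tau^{-1}\bigr)\circ \mu_\sigma^{-1} = \acc{\sigma\tau}f\circ \acc\sigma\mu_\tau^{-1}\circ \mu_\sigma^{-1}
\end{equation*}
for every $\sigma,\tau\in \Gal(L/M)$ and $f\in V$. On the other hand, unwinding the definition of the cocycle $c_{E^*}^L(\sigma,\tau) = \mu_{\sigma\tau}\circ \acc\sigma\mu_\tau^{-1}\circ \mu_\sigma^{-1}$ as an element of $M^\times \simeq \End(E^*)^\times\otimes\Q$, we can rewrite
\begin{equation*}
\acc\sigma\mu_\tau^{-1}\circ \mu_\sigma^{-1} = \mu_{\sigma\tau}^{-1}\cdot c_{E^*}^L(\sigma,\tau),
\end{equation*}
so that
\begin{equation*}
\varrho_V(\sigma)\bigl(\varrho_V(\tau)(f)\bigr) = \acc{\sigma\tau}f\circ \mu_{\sigma\tau}^{-1}\cdot c_{E^*}^L(\sigma,\tau) = \varrho_V(\sigma\tau)(f)\cdot c_{E^*}^L(\sigma,\tau),
\end{equation*}
which is exactly \eqref{equation: crep}. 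Here we are using in a crucial way that $c_{E^*}^L(\sigma,\tau)\in M^\times$ acts naturally on $V$ through the $M$-module structure coming from $\End(E^*)$ (equivalently, by postcomposition with an endomorphism of $E^*_L$), and this scalar commutes past $\acc{\sigma\tau}f$ only up to the Galois action on $M$; since $M\subseteq N\subseteq L$ the field $M$ is fixed pointwise by $\Gal(L/M)$, so no subtlety arises.

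No step presents a genuine obstacle: everything is a bookkeeping exercise once the definitions are in place. The only point to be careful about is ensuring the identification of $c_{E^*}^L(\sigma,\tau)$ as an element of $M^\times$ (rather than as an endomorphism of $E^*$) is compatible with the action on $V$, which, as just noted, works precisely because $M$ is fixed by $\Gal(L/M)$.
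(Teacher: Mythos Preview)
Your proof is correct and follows exactly the same direct computation as the paper, which dispatches the lemma in one line as ``tautological''; you have simply added the surrounding sanity checks (well-definedness, $\varrho_V(1)=1$) and some commentary. One minor quibble: the $M$-action on $V=\Hom(E^*_L,A_L)$ via $\End(E^*)$ is by \emph{pre}composition, and once you write $\acc\sigma\mu_\tau^{-1}\circ\mu_\sigma^{-1}=\mu_{\sigma\tau}^{-1}\circ c_{E^*}^L(\sigma,\tau)$ no ``commuting past $\acc{\sigma\tau}f$'' is needed at all---the scalar is already sitting on the correct side.
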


\begin{proof} This is tautological:
$$
\varrho_V(\sigma)\varrho_V(\tau)(f)={}^{\sigma\tau}f\circ{}^{\sigma}\mu_\tau^{-1}\circ \mu_{\sigma}^{-1}={}^{\sigma\tau}f\circ\mu_{\sigma\tau}^{-1}\cdot c_{E^*}^L(\sigma,\tau)=\varrho_V(\sigma\tau)(f)c_{E^*}^L(\sigma,\tau)\,.
$$
\end{proof}

Let now $R$ denote the $M$-module $\End(A_K)$. It is equipped with the natural Galois conjugation action of $\Gal(L/M)$, which factors through $\Gal(K/M)$ and which we sometimes will write as $\varrho_R(\sigma)(\psi)={}^{\sigma}\psi$. Let $T$ denote $\Hom(V,V)$, equipped with the $c$-representation structure given by Lemma~\ref{lemma: Qcurvecact} and Proposition~\ref{equation: homcrep}. Note that by Corollary \ref{equation: creprep}, we know that $T$ is actually a $M[\Gal(L/M)]$-module.

\begin{lemma}\label{lemma: isoend}
The map
$$
\Phi: R\rightarrow T\simeq V\otimes V^*\qquad \Phi(\psi)(f)= \psi\circ f, \text{for }f\in V, \psi\in \End(A_K)
$$
is an isomorphism of $c$-representations (and thus of $M[\Gal(L/M)]$-modules). 
\end{lemma}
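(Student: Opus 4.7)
The plan is to verify in turn that $\Phi$ is well-defined and $M$-linear, that it is a bijection, and finally that it intertwines the Galois actions on $R$ and $T$. Since by Corollary~\ref{equation: creprep} the $c$-representation structure on $T$ has trivial cocycle $c_T = c_V^{-1} \cdot c_V = 1$, and since $R$ carries a genuine action of $\Gal(L/M)$ through $\Gal(K/M)$, the statement on $M[\Gal(L/M)]$-modules will follow immediately once $\Phi$ is shown to be an $M$-linear $\Gal(L/M)$-equivariant isomorphism.

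For well-definedness and linearity, note that for $\psi \in \End(A_K) \subseteq \End(A_L)$ and $f \in V$, the composition $\psi \circ f$ lies in $\Hom(E^*_L, A_L) = V$, and the map $f \mapsto \psi \circ f$ is $M$-linear with respect to the $M$-action on $V$ coming from $\End(E^*) = M$ by precomposition. To show $\Phi$ is an isomorphism, I would first compare dimensions over $M$: since $A_L \sim (E^*)^2$ over $L$ and $\End(E^*_\Qbar) = M$, the $M$-module $V$ is free of rank $2$, hence $T = \Hom_M(V,V)$ has $M$-dimension $4$; on the other hand, $R = \End(A_K) = \End(A_\Qbar) \simeq \M_2(M)$ also has $M$-dimension $4$. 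Then I would establish injectivity of $\Phi$ by fixing an $L$-isogeny $(f_1,f_2)\colon (E^*)^2 \to A_L$ with $f_i \in V$: if $\psi \circ f = 0$ for all $f\in V$, then $\psi$ vanishes on the image of $(f_1,f_2)$, which is of finite index in $A_L$, forcing $\psi=0$.

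The main computation is the Galois equivariance, $\Phi(\varrho_R(\sigma)\psi)=\varrho_T(\sigma)\Phi(\psi)$ for all $\sigma\in \Gal(L/M)$ and $\psi\in R$. Using the formula $\varrho_V(\sigma)(g) = {}^\sigma g \circ \mu_\sigma^{-1}$ from Lemma~\ref{lemma: Qcurvecact}, one first computes that the inverse is given by $\varrho_V(\sigma)^{-1}(f) = {}^{\sigma^{-1}}\!f \circ {}^{\sigma^{-1}}\!\mu_\sigma$. Then unwinding the definition of $\varrho_T$ yields
\begin{align*}
(\varrho_T(\sigma)\Phi(\psi))(f)
&= \varrho_V(\sigma)\bigl(\psi \circ {}^{\sigma^{-1}}\!f \circ {}^{\sigma^{-1}}\!\mu_\sigma\bigr) \\
&= {}^{\sigma}\!\psi \circ f \circ \mu_\sigma \circ \mu_\sigma^{-1} = {}^{\sigma}\!\psi \circ f = \Phi({}^\sigma\!\psi)(f),
\end{align*}
where the crucial cancellation is that the two factors involving $\mu_\sigma$ meet in the middle and annihilate. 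The only step that requires any care is tracking domains and codomains of the various $\mu_\sigma^{\pm 1}$ and of ${}^\sigma\!f$, but no cocycle correction appears here precisely because $c_T$ is trivial. This computation, together with the dimension/injectivity argument, yields the desired isomorphism of $M[\Gal(L/M)]$-modules.
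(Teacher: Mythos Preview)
Your proof is correct and follows essentially the same plan as the paper's: verify equivariance directly, then conclude by injectivity and a dimension count. The only cosmetic difference is in the equivariance computation: the paper expresses $\varrho_V(\sigma)^{-1}$ as $\varrho_V(\sigma^{-1})c_{E^*}^L(\sigma^{-1},\sigma)^{-1}$ via Remark~\ref{remark: inverse}, so the cocycle appears and then cancels against ${}^\sigma\mu_{\sigma^{-1}}^{-1}\circ\mu_\sigma^{-1}$; you instead invert $\varrho_V(\sigma)$ directly as $f\mapsto{}^{\sigma^{-1}}\!f\circ{}^{\sigma^{-1}}\!\mu_\sigma$, which makes the cancellation $\mu_\sigma\circ\mu_\sigma^{-1}=\mathrm{id}$ immediate and keeps the cocycle out of sight entirely.
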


\begin{proof} The fact that $\Phi$ is a morphism of $c$-representations is  straightforward:
$$
\begin{array}{lll}
\varrho_T(\sigma)(\Phi(^{\sigma^{-1}}\psi))(f) & = &\varrho_V(\sigma)(\Phi(^{\sigma^{-1}}\psi)(\varrho_V(\sigma)^{-1}(f)))\,,\\[6pt]
 & = &\varrho_V(\sigma)(^{\sigma^{-1}}\psi\circ \varrho_V(\sigma^{-1})(f)c_{E^*}^L(\sigma^{-1},\sigma)^{-1})\,,\\[6pt]
 & = & \psi\circ f\circ{}^{\sigma}\mu_{\sigma^{-1}}^{-1}\mu_{\sigma}^{-1}c_{E^*}^L(\sigma^{-1},\sigma)^{-1}\,,\\[6pt]
 & = & \Phi(\psi)(f)\,,
\end{array}
$$
where we have used Remark~\ref{remark: inverse} in the second and last equalities. The lemma follows by noting that $\Phi$ is clearly injective and that both $R$ and $T$ have dimension~$4$ over~$M$.
\end{proof}

We now describe the $M[\Gal(K/M)]$-module structure of $R$. It follows from \eqref{equation: KoverM} that the order $r$ of an element $\sigma\in \Gal(K/M)$ is 1, 2, 3, 4, or 6.

\begin{lemma}\label{lemma: rhoR}
 $\Tr\varrho_R(\sigma)=2+ \zeta_r + \overline \zeta_r$, where $\zeta_r$ is a primitive $r$-th root of unity. 
\end{lemma}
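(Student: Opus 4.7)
The plan is to use the identification $R=\End(A_K)\simeq \M_2(M)$ (which follows from $A_K\sim E^2$ and $\End(E_K)=M$, since $M\subseteq H\subseteq K$) together with the Skolem--Noether theorem to reduce the trace computation to a simple calculation in $\PGL_2$.

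First, I note that the action $\varrho_R$ of $\Gal(K/M)$ on $R$ is $M$-linear and fixes the center $Z(R)=M$ pointwise. Indeed, $Z(R)$ is the diagonal copy of $\End(E_K)=M$ inside $R$, and since $M\subseteq K$ the group $G_M$ (and hence its quotient $\Gal(K/M)$) acts trivially on $\End(E_\Qbar)=M$. Skolem--Noether then produces, for each $\sigma\in\Gal(K/M)$, an element $g_\sigma\in\GL_2(M)$, unique up to $M^\times$, with $\varrho_R(\sigma)(\psi)=g_\sigma\,\psi\, g_\sigma^{-1}$ for all $\psi\in R$.

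Next, I observe that $\varrho_R$ is a faithful representation of $\Gal(K/\Q)$: by the very definition of $K$ as the minimal extension with $\End(A_K)=\End(A_\Qbar)$, the kernel of the $G_\Q$-action on $\End(A_\Qbar)$ is exactly $G_K$, so $\Gal(K/\Q)$ injects into $\mathrm{Aut}(R)$. Consequently, if $\sigma$ has order $r$, then the inner automorphism it induces has order $r$, and hence $g_\sigma$ has order exactly $r$ in $\PGL_2(M)$. Since a nontrivial Jordan block has infinite order modulo scalars in characteristic zero, $g_\sigma$ must be diagonalizable over $\overline{M}$, and its eigenvalues $\alpha,\beta$ satisfy $\alpha/\beta=\zeta_r$, a primitive $r$-th root of unity.

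Finally, I carry out the trace computation. Diagonalizing $g_\sigma$ over $\overline{M}$ and using the corresponding basis of $\M_2(\overline{M})$ by elementary matrices, conjugation by $g_\sigma$ acts on the $(i,j)$ entry by the scalar $\alpha_i/\alpha_j$, where $(\alpha_1,\alpha_2)=(\alpha,\beta)$. The four eigenvalues are therefore $1,\,1,\,\alpha/\beta,\,\beta/\alpha$, yielding
\[
\Tr\varrho_R(\sigma)=2+\zeta_r+\zeta_r^{-1}=2+\zeta_r+\overline{\zeta}_r,
\]
as claimed. The only mildly delicate points are that $Z(R)$ is Galois-fixed and that $\varrho_R$ is faithful; once these are in place, the computation itself is completely routine.
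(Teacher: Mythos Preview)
Your proof is correct and takes a genuinely different route from the paper's. The paper argues indirectly: it quotes \cite[Proposition~4.9]{FKRS12} for the trace of the $8$-dimensional $\Q$-representation $R_\Q$, and then establishes separately that $\Tr\varrho_R\in\Q$ (the latter step requiring a reduction to the case where $E^*$ can be taken over $M$, handled in \cite{FS14}, together with a case analysis ruling out $M=\Q(i),\Q(\sqrt{-3})$). Your approach bypasses both external inputs: once you know that $\Gal(K/M)$ fixes $Z(R)$ pointwise and that the action is faithful, Skolem--Noether and an elementary $\PGL_2$ computation give the eigenvalues $1,1,\zeta_r,\zeta_r^{-1}$ directly, and rationality of the trace is automatic.

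The one point where your write-up is a bit compressed is the justification that $G_M$ fixes $Z(R)$. Your sentence ``$G_M$ acts trivially on $\End(E_\Qbar)=M$'' is not literally meaningful, since $E$ is only defined over $K$ and $G_M$ does not act on $\End(E_\Qbar)$ in the naive sense. What is true is that, transporting the $\Gal(K/M)$-action on $R$ across the isogeny $A_K\sim E^2$, the induced action on $Z(R)\simeq\End(E)$ is exactly the $\star$-action $\sigma\star\varphi=\mu_\sigma\circ{}^\sigma\varphi\circ\mu_\sigma^{-1}$ of \S\ref{section: kcurvegendef}, which the paper already records as corresponding to the natural Galois action on $M$ and hence trivial over $M$. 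Alternatively, one can argue via the action of $Z(R)$ on $\mathrm{Lie}(A_\Qbar)$, which is through a single embedding $M\hookrightarrow\Qbar$. Either way, the claim is standard and your flagging it as ``mildly delicate'' is apt; the overall argument stands.
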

\begin{remark}
  Note that this lemma is proven in \cite[Proposition 3.4]{FS14} under the strong running hypothesis of that paper: in our setting that hypothesis would say that there exists $E^*$ defined over $M$ such that $A_\Qbar\sim E_\Qbar^{*2}$ (i.e., that $N$ can be taken to be $M$, in the notation of the previous section).
\end{remark}
\begin{proof}
We claim that $\Tr(\varrho_R)\in M$ is in fact rational. Let us postpone the proof of this claim until the end of the proof of the lemma. Assuming it, we have that
\begin{equation}\label{equation: trrhoQ1}
\Tr_{M/\Q}(\Tr(\varrho_R(\sigma)))=2\Tr(\varrho_{R})(\sigma)\,.
\end{equation}
But if $\varrho_{R_\Q}$ is the representation afforded by $R$ regarded as an 8 dimensional module over $\Q$, we have
\begin{equation}\label{equation: trrhoQ}
\Tr_{M/\Q}(\Tr(\varrho_R(\sigma)))=\Tr(\varrho_{R_\Q})(\sigma)= 2(2+ \zeta_r+ \overline \zeta_r),
\end{equation}
where the last equality is \cite[Proposition 4.9]{FKRS12}. The comparison of \eqref{equation: trrhoQ1} and \eqref{equation: trrhoQ} concludes the proof of the lemma.

We turn now to prove the rationality of $\Tr\varrho_R$. We first recall the aforementioned proof (that of \cite[Proposition 3.4]{FS14}) which uses the fact that we can choose~$E^*$ to be defined over $M$. In this case, we have that $V$ is an $M[\Gal(L/M)]$-module, that $\Tr(\varrho_{V^*})$ is a sum of roots of unity so that $\Tr(\varrho_{V^*})= \overline {\Tr(\varrho_V)}$, and hence that $\Tr(\varrho_R)=\Tr(\varrho_V)\cdot \overline {\Tr\varrho_V}$ belongs to $\Q$. 

For the general case, assume that $\Tr\varrho_R$ does not belong to $\Q$. Since it is a sum of roots of unity of orders diving either 4 or 6, then $M$ would be $\Q(i)$ or $\Q(\sqrt{-3})$, but then we could take a model of $E^*$ defined over $M$, and by the above paragraph, the trace $\Tr\varrho_R$ would be rational, which is a contradiction.    
\end{proof}

\subsection{Obstructions}\label{section: obstructions}

Keep the notations from Section \ref{section: crepCMsq} and Section~\ref{section: QcurveCMsquare}. Let $S$ denote the normal subgroup of $\Gal(K/M)$ generated by the square elements.
In this section, we make the following hypotheses.

\begin{hypothesis} \label{hypothesis: feble}
\begin{enumerate}[i)]
\item There exists a Ribet $M$-curve $E^*$ with CM by $M$ completely defined over~$N$, where $N/M$ is the subextension of $K/M$ fixed by~$S$.
\item $M\not = \Q(i)$, $\Q(\sqrt{-3})$.
\end{enumerate}
\end{hypothesis}

Let $\sigma\in\Gal(K/M)$ be an element of order $r \in\{ 4, 6\}$. 
Let 
\begin{equation}\label{equation: projmap}
\bar \cdot : \Gal(K/M) \rightarrow \Gal(N/M)\simeq \Gal(K/M)/S
\end{equation}
denote the natural projection map. Note that $\Gal(N/M)$ is a group of exponent dividing $2$. 

\begin{theorem}\label{th: obstructions}
Under Hypothesis~\ref{hypothesis: feble}, we have:
\begin{enumerate}[i)]
\item If $r=4$, then $ 2c_{E^*}^N(\bar \sigma,\bar \sigma)$ belongs to $ \pm (M^{\times })^2$.
\item If $r=6$, then $ 3c_{E^*}^N(\bar \sigma,\bar \sigma)$ belongs to $ \pm (M^{\times })^2$.
\end{enumerate}
\end{theorem}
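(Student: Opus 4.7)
The strategy is to extract information about $c_{E^*}^N(\bar\sigma,\bar\sigma)$ by combining three ingredients: the tensor decomposition $R\simeq V\otimes V^*$ from Lemma~\ref{lemma: isoend}, the trace formula of Lemma~\ref{lemma: rhoR}, and the $c$-representation identity $\varrho_V(\tilde\sigma)^2=c_{E^*}^L(\tilde\sigma,\tilde\sigma)\,\varrho_V(\tilde\sigma^2)$. To prepare, I would fix a lift $\tilde\sigma\in\Gal(L/M)$ of $\sigma$ and choose the system of isogenies $\{\mu_\tau\}_{\tau\in\Gal(L/M)}$ so that $\mu_\tau$ depends only on $\bar\tau\in\Gal(N/M)$, with $\mu_\tau=\mathrm{id}$ whenever $\bar\tau=1$. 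This is possible because Hypothesis~\ref{hypothesis: feble}(i) furnishes a Ribet $M$-curve $E^*$ completely defined over $N$. The resulting cocycle $c_{E^*}^L$ is then the inflation of $c_{E^*}^N$; in particular $c_{E^*}^L(\tilde\sigma,\tilde\sigma)=c_{E^*}^N(\bar\sigma,\bar\sigma)$ (abbreviated $c$), and $c_{E^*}^L$ is trivial on $\Gal(L/N)\times\Gal(L/N)$, so that $\varrho_V|_{\Gal(L/N)}$ is a genuine representation.

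Set $t=\Tr\varrho_V(\tilde\sigma)\in M$ and $d=\det\varrho_V(\tilde\sigma)\in M^\times$, and let $\lambda_1,\lambda_2\in\overline M$ be the eigenvalues of $\varrho_V(\tilde\sigma)$. Since $\varrho_{V^*}(\tilde\sigma)=(\varrho_V(\tilde\sigma)^{-1})^T$ has eigenvalues $\lambda_1^{-1},\lambda_2^{-1}$, the isomorphism of Lemma~\ref{lemma: isoend} gives
\[
\Tr\varrho_R(\sigma)=\Tr\varrho_V(\tilde\sigma)\cdot\Tr\varrho_{V^*}(\tilde\sigma)=(\lambda_1+\lambda_2)(\lambda_1^{-1}+\lambda_2^{-1})=t^2/d.
\]
Lemma~\ref{lemma: rhoR} evaluates the left-hand side as $2+\zeta_r+\bar\zeta_r$, i.e.\ $2$ if $r=4$ and $3$ if $r=6$, so $t^2=n\cdot d$ with $n\in\{2,3\}$. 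On the other hand, taking determinants in $\varrho_V(\tilde\sigma)^2=c\,\varrho_V(\tilde\sigma^2)$ yields $d^2=c^2\det\varrho_V(\tilde\sigma^2)$.

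Because $\Gal(N/M)$ has exponent dividing $2$, one has $\bar\sigma^2=1$ and hence $\tilde\sigma^2\in\Gal(L/N)$. By the first paragraph, $\varrho_V|_{\Gal(L/N)}$ is a true representation, so $\det\varrho_V|_{\Gal(L/N)}$ is a character of a finite group with values in $M^\times$; its image therefore lies in the group of roots of unity of $M$, which equals $\{\pm 1\}$ by Hypothesis~\ref{hypothesis: feble}(ii). Since $\det\varrho_V(\tilde\sigma^2)=(d/c)^2$ is automatically a square in $M^\times$ and $-1\notin(M^\times)^2$, it must equal $+1$; consequently $d=\pm c$. Combining the two computations gives $n\,c=\pm n\,d=\pm t^2\in\pm(M^\times)^2$, which is exactly the stated conclusion in both cases.

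The main obstacle I anticipate is the bookkeeping needed to arrange the system of isogenies $\{\mu_\tau\}$ so that $c_{E^*}^L$ is simultaneously the inflation of $c_{E^*}^N$ and trivial on $\Gal(L/N)\times\Gal(L/N)$; this is what reduces the problem from a computation on $\Gal(L/M)$ to one on $\Gal(N/M)$ and makes the determinant of $\varrho_V(\tilde\sigma^2)$ a genuine character. After that, the role of Hypothesis~\ref{hypothesis: feble}(ii) is purely cosmetic — it identifies the unit group of $M$ with $\{\pm 1\}$ and validates Lemma~\ref{lemma: rhoR} — and the rest of the proof is a short trace/determinant calculation.
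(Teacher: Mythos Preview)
Your proof is correct and rests on the same three ingredients as the paper's---the tensor decomposition $R\simeq V\otimes V^*$, the trace formula of Lemma~\ref{lemma: rhoR}, and the inflation of $c_{E^*}^L$ from $c_{E^*}^N$---but the execution is genuinely more direct. The paper raises $\varrho_V(s)$ to the $2r$-th power to obtain $\alpha^{2r}=c^r$ (hence $\omega_r\alpha^2=c$ for some $r$-th root of unity $\omega_r$), proves separately from the trace identity that $\alpha/\beta$ is a primitive $r$-th root of unity, and then combines these into $(2+\zeta_r+\bar\zeta_r)\,c=(\alpha+\beta)^2\omega_r\zeta_r$, finally arguing that $\omega_r\zeta_r=\pm 1$ since $M\neq\Q(i),\Q(\sqrt{-3})$. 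You bypass the eigenvalue-ratio analysis entirely: the trace identity gives $t^2=nd$ directly (with $t\neq 0$ forced since $n\neq 0$), and your determinant argument---using that $\det\varrho_V|_{\Gal(L/N)}$ is a genuine character into $\mu(M)=\{\pm1\}$ and that $(d/c)^2\neq -1$---yields $d=\pm c$ in one line. Both proofs invoke Hypothesis~\ref{hypothesis: feble}(ii) at the analogous point, to pin the relevant root of unity down to $\pm1$.

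One thing the paper's longer route buys, which yours does not: its power computation shows that when $r=4$ any lift of $\sigma$ to $\Gal(L/M)$ must have order $8$ (Remark~\ref{remark: LM4}), and this is used later in Proposition~\ref{proposition: LM4} to constrain $\Gal(L/M)$. Your determinant shortcut does not see the order of the lift, so if you follow the rest of the paper you would need to supply that observation separately.
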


\begin{proof}
First of all, note that $E^*$ is completely defined over $N$. Thus we can, and do, assume that $c_{E^*}^L$ is the inflation of $c_{E^*}^N$. Let $s \in \Gal(L/M)$ be a lift of $\sigma$. By part ii) of Hypothesis~\ref{hypothesis: feble}, we have that $[L:K]\leq 2$. Therefore, the order of $s$ divides $2r$. We then have
\begin{equation}\label{equation: prodmagic}
\varrho_V(s)^{2r}=\varrho_V( s ^2 )^rc_{E^*}^N(\bar \sigma,\bar \sigma)^r= \varrho_V(s^{2r}) c_{E^*}^N(\bar\sigma,\bar\sigma)^r=c_{E^*}^N(\bar\sigma,\bar\sigma)^r\,,
\end{equation}
where we have used that $c_{E^*}^N(\bar \sigma^{2e},\bar \sigma^{2e'})=1$ for any pair of integers $e,e'$. Let $\alpha$ and~$\beta$ be the eigenvalues of $\varrho_V( s)$. By \eqref{equation: prodmagic}, we have that $\alpha^{2r}= c_{E^*}^N(\bar\sigma,\bar\sigma)^r$, from which we deduce that $\omega_r\alpha^2=c_{E^*}^N(\bar\sigma,\bar\sigma)\in M^\times$, where $\omega_r$ is a  (not necessarily primitive) $r$-th  root of unity.  

Since the eigenvalues of $\varrho_{V^*}(s)$ are $1/\alpha$ and $1/\beta$, by Lemmas \ref{lemma: rhoR} and \ref{lemma: isoend} we have that
\begin{equation}\label{equation: rcyclotomic}
2+ \zeta_r + \overline \zeta_ r=(\alpha+\beta)\left(\frac{1}{\alpha } +\frac{1}{\beta}\right); \text{ equivalently, }   \alpha^2+\beta^2=(\zeta_r+\overline \zeta_r)\alpha \beta\,.
\end{equation}
This means that $\alpha/\beta$ satisfies the $r$-th cyclotomic polynomial and thus, by reordering $\alpha$ and $\beta$ if necessary, we have that $\alpha=\beta\zeta_r$.

Combining this with \eqref{equation: rcyclotomic}, we get
$$
(2+\zeta_r+ \overline \zeta_r)c_{E^*}^N(\bar \sigma,\bar \sigma)=(2+\zeta_r+ \overline \zeta_r)\omega_r\alpha^2=(2+\zeta_r+ \overline \zeta_r)\alpha \beta\omega_r\zeta_r=(\alpha+\beta)^2\omega_r\zeta_r\,.
$$
Since the left-hand side is in $M^\times$, the fact that $\alpha+\beta \in M^\times$ tells us that $\omega_r\zeta_r\in M^\times$. If $\omega_r\zeta_r$ is not rational, then $M=\Q(\zeta_r)$, which contradicts part ii) of Hypothesis~\ref{hypothesis: feble}. If $\omega_r\zeta_r\in\Q$, since it is a root of unity, it must be $\pm 1$ and thus we get
$$
\pm(2+\zeta_r+ \overline \zeta_r)c_{E^*}^N(\bar \sigma,\bar \sigma)=(\alpha+\beta)^2\, .
$$
Therefore, $(2+\zeta_r+ \overline \zeta_r)c_{E^*}^N(\bar \sigma,\bar \sigma)$ belongs to $ \pm (M^{\times})^2$.
\end{proof}

\begin{remark}\label{remark: LM4}
Note that it follows from the above proof that if $r=4$, then any lift $s \in \Gal(L/M)$ of $\sigma$ has order $2r=8$. Indeed, if the order of $s$ was $r$, then arguing as in \eqref{equation: prodmagic}, we would obtain $\varrho_V(s)^r=c_{E^*}^N(\bar \sigma, \bar \sigma)^{r/2}$, from which we would infer $\omega_{r/2}\alpha^2=c_{E^*}^N(\bar\sigma,\bar\sigma)$, for some (not necessarily primitive) $r/2$-th root of unity. We could then run the same argument as above, but since $\omega_{r/2}\zeta_r$ is never rational, we would deduce now that $M=\Q(i)$. Note that if $r=6$ it can certainly happen that $\omega_{r/2}\zeta_r\in\Q$.
\end{remark}

Until the end of this section, we make the following additional assumption on~$M$.
\begin{hypothesis}\label{hypothesis: forta} 
\begin{enumerate}[i)]
\item $\Gal(K/M)\simeq \dih 4$ or $\dih 6$.
\item $M\not = \Q(i)$, $\Q(\sqrt{-3})$.
\end{enumerate}
\end{hypothesis} 

Hypothesis i) implies that $N/M$ is a biquadratic extension. By part i) of Proposition~\ref{proposition: McurveGrossCurve}, there exists a Ribet $M$-curve $E^*$ with CM by $M$ completely defined over the Hilbert class field $H$ of $M$. Using \cite[Theorem 2.14]{FG18}, it is immediate to see that $H\subseteq N$, so that Hypothesis~\ref{hypothesis: forta} implies Hypothesis~\ref{hypothesis: feble}.

The next two propositions describe the structure of the group $\Gal(L/M)$.

\begin{proposition}\label{proposition: LM4}
 If $\Gal(K/M)\simeq \dih 4$, then $\Gal(L/M)$ is isomorphic to either the dihedral group $\dih 8$; the generalized dihedral group $\mathrm{QD}_8$ of order $16$; or the generalized quaternion group $\mathrm Q_{16}$\footnote{The gap identification numbers of $\mathrm{QD}_8$ and $\mathrm Q_{16}$ are $\langle 16,8\rangle$ and $\langle 16,9\rangle$, respectively.}.
\end{proposition}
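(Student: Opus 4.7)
The plan is to pin down the order of $\Gal(L/M)$ and then invoke a classification of groups of order $16$. First, I would note that the bound $[L:K]\leq 2$ established in the proof of Theorem~\ref{th: obstructions}, which uses part ii) of Hypothesis~\ref{hypothesis: feble} (and is available here thanks to part ii) of Hypothesis~\ref{hypothesis: forta}), gives $|\Gal(L/M)|\leq 16$. On the other hand, picking any element $\sigma\in\Gal(K/M)\simeq\dih 4$ of order $4$ and any lift $s\in\Gal(L/M)$ of $\sigma$, Remark~\ref{remark: LM4} forces $s$ to have order $8$. Since $\dih 4$ contains no element of order $8$, this rules out $\Gal(L/M)\simeq\dih 4$, so $[L:K]=2$ and $|\Gal(L/M)|=16$.

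Next, I would observe that $\Gal(L/K)$, being normal of order $2$ in $\Gal(L/M)$, is automatically central: if $Z=\{1,z\}$ is normal then $gzg^{-1}\in Z\setminus\{1\}=\{z\}$ for every $g$. Hence $G:=\Gal(L/M)$ is a group of order $16$ containing a central subgroup $Z$ of order $2$ with $G/Z\simeq\dih 4$, and simultaneously containing an element of order $8$ (any lift $s$ as above).

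The proof is then finished by a classification argument. Up to isomorphism there are exactly six groups of order $16$ containing an element of order $8$: the abelian groups $\Z/16\Z$ and $\Z/8\Z\times\Z/2\Z$, the dihedral group $\dih 8$, the semidihedral group $\mathrm{QD}_8$, the modular maximal-cyclic group $M_4(2)=\langle a,b\mid a^8=b^2=1,\,bab^{-1}=a^5\rangle$, and the generalized quaternion group $\mathrm{Q}_{16}$. The abelian candidates have abelian quotients and so cannot yield the non-abelian group $\dih 4$; for $M_4(2)$, the unique central involution is $a^4$ and the corresponding quotient is $\Z/4\Z\times\Z/2\Z$, again abelian. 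This leaves only $\dih 8$, $\mathrm{QD}_8$, and $\mathrm{Q}_{16}$, each of which indeed admits a central involution with quotient $\dih 4$, proving the proposition.

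The only substantive input is Remark~\ref{remark: LM4}, which forces the existence of an order-$8$ lift of an order-$4$ element of $\dih 4$; without this, one could not distinguish the three listed groups from the other eleven groups of order $16$. The remainder of the argument is routine group-theoretic bookkeeping.
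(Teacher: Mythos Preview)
Your proof is correct and follows essentially the same approach as the paper's: both arguments rest on Remark~\ref{remark: LM4} (lifts of order-$4$ elements have order $8$), the bound $[L:K]\le 2$, and a classification of the relevant order-$16$ groups. The paper compresses the group-theoretic step into a single assertion (``the groups of order $16$ with a $\dih 4$ quotient satisfying this lifting property are the three listed''), whereas you spell it out by first listing the six order-$16$ groups containing an element of order $8$ and then eliminating the three whose quotient by the unique central involution is abelian; this is the same computation made explicit.
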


\begin{proof}
If $\Gal(K/M)\simeq \dih 4$, then by Remark \ref{remark: LM4} we have that any element of $\Gal(L/M)$ projecting onto an element of $\Gal(K/M)$ of order $4$ must have order~8. The groups of order 16 with a quotient isomorphic to $\dih 4$ satisfying the previous property are those in the statement of the proposition. 
\end{proof}

\begin{proposition}\label{proposition: LM6}
If $\Gal(K/M)\simeq \dih 6$, there exists a Ribet $M$-curve $E^*$ completely defined over $N$ with CM by $M$ such that $E \sim E^*_K$ and hence $L=K$ and $\Gal(L/M)\simeq \dih 6$.
\end{proposition}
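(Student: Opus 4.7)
The plan is to descend $E$ from $K$ down to $N$ via the Ribet--Weil criterion (Theorem \ref{theorem: Weilsdescend}). Since $\Gal(K/M)\simeq \dih 6$, the normal subgroup $S$ generated by squares is the cyclic group of order~$3$, so $\Gal(K/N)\simeq \cyc 3$ while $\Gal(N/M)\simeq \cyc 2\times \cyc 2$. Because $M\subseteq H\subseteq K$, the curve $E$ is a Ribet $M$-curve completely defined over $K$, and Theorem \ref{theorem: Weilsdescend} applies with $k=M$ and $L=N$.

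The first step is to check that $\res_N^K(\gamma_E^K)=1$ in $H^2(\Gal(K/N),M^\times)$. By Theorem \ref{th: 2-torsion} the class $\gamma_E^K$ is $2$-torsion, and hence so is its restriction. On the other hand, $H^2(\cyc 3,M^\times)$ is annihilated by $|\cyc 3|=3$. The restricted class is therefore simultaneously killed by $2$ and by $3$, so it is trivial, and Ribet--Weil yields an elliptic curve $E^*/N$ with $E^*_K\sim E$.

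Next, I would verify that $E^*$ meets all the required conditions. Since $E^*_K\sim E$ has CM by $M$ and $M\subseteq N$, the curve $E^*$ itself has CM by $M$, and is therefore automatically a Ribet $M$-curve. The nontrivial point is to check that $E^*$ is \emph{completely} defined over $N$, i.e., that for every $\sigma\in\Gal(N/M)$ one has $\Hom({}^\sigma E^*_N,E^*_N)\neq 0$. Over $K$ this Hom space is a free $M$-module of rank $1$ on which $\Gal(K/N)\simeq \cyc 3$ acts $M$-linearly, namely through a character $\cyc 3\to M^\times$. Since $M\neq \Q(\sqrt{-3})$, the group $M^\times$ contains no element of order $3$, so this character is trivial and the Hom space descends to~$N$. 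Finally, $E^*_K\sim E$ forces $L\subseteq K$, while by definition $L\supseteq K$; hence $L=K$ and $\Gal(L/M)\simeq \dih 6$.

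The main obstacle is reconciling the $2$-torsion of $\gamma_E^K$ with the $3$-torsion of $H^2(\cyc 3,M^\times)$; once this yields the vanishing of the restriction, Ribet--Weil does the heavy lifting. The exclusion of $\Q(\sqrt{-3})$ in Hypothesis \ref{hypothesis: forta} plays the dual role of ruling out nontrivial cubic characters on the Hom space, ensuring that the ``completely defined'' condition survives the descent from $K$ to $N$.
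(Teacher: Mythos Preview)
Your proof is correct and follows the same overall route as the paper: show that $\res(\gamma_E^K)=1$ in $H^2(\Gal(K/N),M^\times)$, invoke Ribet--Weil to descend $E$ to some $E^*/N$, and then verify that $E^*$ is completely defined over $N$. The differences are only in the sub-arguments. For the vanishing of the restriction, your torsion argument (the class is $2$-torsion while $H^2(\cyc 3,M^\times)$ is $3$-torsion) is more elementary than the paper's, which invokes the sign/degree decomposition \eqref{eq: dec of 2-torsion} and checks each piece separately; your version does not even need $M\neq\Q(i),\Q(\sqrt{-3})$ at this step. For ``completely defined'', the paper argues that any isogeny ${}^\sigma E^*\to E^*$ is defined over an at-most-quadratic extension of $N$ (using $M^\times_{\mathrm{tors}}=\{\pm 1\}$) and also over the cubic extension $K$, hence over $N$; your character argument (a homomorphism $\cyc 3\to M^\times$ is trivial when $M\neq\Q(\sqrt{-3})$) is the same idea phrased from inside $K$ rather than from $\Qbar$.
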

\begin{proof}
  Recall the cohomology class $\gamma_E^K\in H^2(\Gal(K/M),M^\times)[2]$ attached to $E$ and consider the restriction map
  $$
  \res: H^2(\Gal(K/M),M^\times)\rightarrow H^2(\Gal(K/N),M^\times)\,.
  $$ 
  We will first see that $\gamma=\res\gamma_E^K$ is trivial. Recall the decomposition \eqref{eq: dec of 2-torsion} of the $2$-torsion cohomology classes into degree and sign components
  \begin{align*}
    H^2(\Gal(K/N),M^\times)[2]\simeq  H^2(\Gal(K/N),\{\pm 1\})\times \Hom(\Gal(K/N),P/P^2),
  \end{align*}
and the notation $\gamma_\pm$ (resp. $\bar\gamma$) for the sign component (resp. degree component) of $\gamma$. Since $\Gal(K/N)\simeq \cyc 3$ is the subgroup of $\Gal(K/M)$ generated by the squares, we have that $\bar\gamma$ is trivial. Since
$$
H^2(\Gal(K/N),\{\pm 1\})\simeq H^2(\cyc 3, \{\pm 1\})= 0\,,
$$
we see that $\gamma_{\pm}$ is also trivial. By Theorem \ref{theorem: Weilsdescend}, there exists an elliptic curve $E^*$ defined over~$N$ such that $E^*_K\sim E$. To see that $E^*$ is completely defined over $N$, on the one hand, note that since $M\not = \Q(i), \Q(\sqrt{-3})$, then $E^*$ and any Galois conjugate ${}^{\sigma}E^*$ of it are isogenous over a quadratic extension of $N$. On the other hand, since  $E^*_K\sim E$ and $E$ is completely defined over $K$, we have that the smallest field of definition of $\Hom(E_\Qbar^*,\acc\sigma E_\Qbar^*)$ is contained in $K$. Since $K/N$ is a cubic extension, we deduce that $E^*$ and ${}^\sigma E^*$ are in fact isogenous over $N$. 
\end{proof}

\begin{corollary}\label{corollary: elementsgroup} If $\Gal(K/M)\simeq \dih r$ for $r=4$ or $6$, there exists a Ribet $M$-curve $E^*$ with CM by $M$ completely defined over $N$ for which $\Gal(L/M)$ contains
\begin{enumerate}[i)]
\item an element $s$ of order $8$ if $r=4$ and of order $6$ if $r=6$;
\item an element $t$ such that $tst^{-1}=t^a$ for $1\leq a\leq 2r$ such that $a\equiv -1 \pmod r$.
\end{enumerate}
\end{corollary}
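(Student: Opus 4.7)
The proof should be a largely group-theoretic corollary of Propositions~\ref{proposition: LM4} and~\ref{proposition: LM6}, together with Remark~\ref{remark: LM4}. The displayed condition $tst^{-1}=t^a$ should read $tst^{-1}=s^a$, which is what the proof will establish. The plan divides cleanly according to whether $r=4$ or $r=6$.

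For $r=6$, the statement is essentially immediate from Proposition~\ref{proposition: LM6}. That proposition furnishes a Ribet $M$-curve $E^*$ with CM by $M$ completely defined over $N$ with $E\sim E^*_K$, and asserts $L=K$ and $\Gal(L/M)\simeq \dih 6$. Then I take $s$ to be a generator of the cyclic rotation subgroup of order $6$ and $t$ any reflection; the defining relation of $\dih 6$ gives $tst^{-1}=s^{-1}=s^{5}$, and $5\equiv -1 \pmod 6$ fits the conclusion with $a=5$.

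For $r=4$, I first fix a Ribet $M$-curve $E^*$ with CM by $M$ completely defined over $N$ as provided by the discussion after Hypothesis~\ref{hypothesis: forta} (using Proposition~\ref{proposition: McurveGrossCurve}(i) together with the fact that $H\subseteq N$). By Proposition~\ref{proposition: LM4}, $\Gal(L/M)$ is a group of order $16$ whose quotient by $\Gal(L/K)$ is isomorphic to $\dih 4$. Choose $\bar s\in \Gal(K/M)$ of order $4$ and lift it to $s\in \Gal(L/M)$; by Remark~\ref{remark: LM4}, $s$ has order $8$. Choose any lift $t\in\Gal(L/M)$ of a reflection $\bar t\in \Gal(K/M)\simeq \dih 4$.

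The key observation is then purely group-theoretic. The subgroup $\langle s\rangle$ has order $8$, hence index $2$ in the order-$16$ group $\Gal(L/M)$, so $\langle s\rangle$ is normal and $tst^{-1}\in \langle s\rangle$. Projecting to $\Gal(K/M)$ we have $tst^{-1}\mapsto \bar t\bar s\bar t^{-1}=\bar s^{-1}=\bar s^{3}$. The kernel of the restricted projection $\langle s\rangle\to \langle \bar s\rangle$ is $\langle s^{4}\rangle$, so the preimages of $\bar s^{3}$ in $\langle s\rangle$ are exactly $\{s^{3},s^{7}\}$. Thus $tst^{-1}=s^{a}$ with $a\in\{3,7\}$, and both satisfy $1\le a\le 2r=8$ and $a\equiv -1 \pmod 4$. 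This is the most delicate step, but the normality of $\langle s\rangle$ bypasses any need to case-split among $\dih 8$, $\mathrm{QD}_8$, $\mathrm{Q}_{16}$, and so no genuine obstacle remains.
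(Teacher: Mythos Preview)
Your proof is correct. For $r=6$ you match the paper exactly: Proposition~\ref{proposition: LM6} forces $\Gal(L/M)\simeq\dih 6$, and the dihedral relation finishes it. For $r=4$ your argument is genuinely different from the paper's. The paper simply lists the three presentations
\[
\dih 8\simeq\langle s,t\mid s^8,t^2,tst^{-1}s\rangle,\quad
\mathrm{QD}_8\simeq\langle s,t\mid s^8,t^2,tsts^5\rangle,\quad
\mathrm{Q}_{16}\simeq\langle s,t\mid s^8,t^2s^4,tst^{-1}s\rangle
\]
and leaves the reader to read off $a\in\{3,7\}$ in each case. You instead observe that $\langle s\rangle$, having index~$2$ in a group of order~$16$, is normal, so $tst^{-1}=s^a$ for some~$a$; projecting to $\Gal(K/M)\simeq\dih 4$ then forces $a\equiv -1\pmod 4$. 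This is cleaner and more conceptual: it treats all three groups uniformly and makes transparent \emph{why} the congruence $a\equiv -1\pmod r$ must hold (it is inherited from the dihedral relation in the quotient). The paper's case-by-case check has the minor advantage of being completely explicit, but your route explains the phenomenon rather than merely verifying it. You are also right that the displayed relation should read $tst^{-1}=s^a$.
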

\begin{proof}
This is obvious when $\Gal(L/M)$ is dihedral. For the other options allowed by Proposition~\ref{proposition: LM4}, recall that
$$
\mathrm{QD}_8\simeq \langle s,t \,|\, s^8, t^2,tsts^5\rangle\,,\qquad \mathrm{Q}_{16}\simeq \langle s,t \,|\, s^8, t^2s^4,tst^{-1}s\rangle\,.
$$
\end{proof}

\begin{remark}\label{remark: ele}
It is clear from the proof of Proposition~\ref{proposition: LM6} that, in the case that $N=H$ and $H$ is not exceptional, we can choose $E^*$ in the above corollary to be a Gross $\Q$-curve.
\end{remark}

Until the end of this section, we will assume that $E^*$ is as in the previous corollary. Let $s$ and $t$ be also as in the corollary, and let $\sigma$ and $\tau$ be the images of $s$ and $t$ under the projection map 
$$
\Gal(L/M)\rightarrow \Gal(K/M)\,.
$$ 
Recall also the projection map $\bar \cdot : \Gal(K/M)\rightarrow \Gal(N/M)$ and note that $\bar \sigma$ and $\bar \tau$ are non-trivial elements of $\Gal(N/M)$.
\begin{theorem}\label{theorem: dihedralmagic}
Under Hypothesis~\ref{hypothesis: forta}, we have $ c_{E^*}^N(\bar \tau, \bar \tau)=\pm 1$.
\end{theorem}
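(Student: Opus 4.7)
The plan is to adapt the argument of the proof of Theorem~\ref{th: obstructions} to a lift $t \in \Gal(L/M)$ of the reflection $\tau$ (as provided by Corollary~\ref{corollary: elementsgroup}), rather than to a lift of a rotation.

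First, I would compute the spectrum of $\varrho_V(t)$. Since $\tau$ has order $2$ in $\Gal(K/M)$, Lemma~\ref{lemma: rhoR} gives $\Tr\varrho_R(\tau) = 2 + \zeta_2 + \overline \zeta_2 = 0$. Under the identification $R \simeq V \otimes V^*$ of Lemma~\ref{lemma: isoend}, writing $\alpha,\beta$ for the eigenvalues of $\varrho_V(t)$, the relation $0 = 2 + \alpha/\beta + \beta/\alpha$ forces $\beta = -\alpha$. In particular $\varrho_V(t)^2 = \alpha^2 I$ with $\alpha^2 = -\det\varrho_V(t) \in M^\times$.

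Next, I would match this with the $c$-representation identity $\varrho_V(t)^2 = \varrho_V(t^2)\, c_{E^*}^N(\bar\tau,\bar\tau)$. Propositions~\ref{proposition: LM4} and~\ref{proposition: LM6} identify $t^2$: it equals $1$ when $\Gal(L/M) \in \{\dih 8, \mathrm{QD}_8, \dih 6\}$, and equals $s^4$ when $\Gal(L/M) = \mathrm{Q}_{16}$. In the latter case, arguing as in Remark~\ref{remark: LM4} applied to $s$ (which has order $2r=8$) gives $\varrho_V(s^4) = -I$. One concludes in all cases that $c_{E^*}^N(\bar\tau,\bar\tau) = \pm \alpha^2$.

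The key remaining step is to pin down $\alpha^2 \in \{\pm 1\}$ using the relation $tst^{-1} = s^a$ of Corollary~\ref{corollary: elementsgroup}, with $a \equiv -1 \pmod r$. Translating via the $c$-representation property yields a matrix identity $\varrho_V(t)\,\varrho_V(s)\,\varrho_V(t)^{-1} = \lambda\, \varrho_V(s)^{-1}$ for an explicit scalar $\lambda$ expressed as a product of cocycle values involving $c_{E^*}^N(\bar\tau,\bar\tau)$, $c_{E^*}^N(\bar\sigma,\bar\sigma)$, $c_{E^*}^N(\bar\tau,\bar\sigma)$ and $c_{E^*}^N(\bar\tau\bar\sigma,\bar\tau)$. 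Diagonalising $\varrho_V(s)$ and using that $\varrho_V(t)$ must be antidiagonal in that basis (as forced by the trace-zero condition), a direct spectral computation identifies $\lambda$ with $\det\varrho_V(s) = \alpha_s\beta_s$, which by the proof of Theorem~\ref{th: obstructions} equals $\pm c_{E^*}^N(\bar\sigma,\bar\sigma)$. Substituting back, applying Lemma~\ref{lemma: pm1} together with the cocycle condition on the biquadratic group $\Gal(N/M) \simeq \cyc 2 \times \cyc 2$, and comparing with the identity $c_{E^*}^N(\bar\tau,\bar\tau) = \pm \alpha^2$ from the previous step then yields $c_{E^*}^N(\bar\tau,\bar\tau) = \pm 1$.

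The main obstacle I anticipate lies in this last comparison: the various $\pm$ signs introduced by Lemma~\ref{lemma: pm1}, by the $\mathrm{Q}_{16}$ case, and by the precise value of $a \pmod r$ in the relation $tst^{-1} = s^a$ must be tracked in unison, which will likely require a separate analysis for each of the four possibilities for $\Gal(L/M)$ allowed by Propositions~\ref{proposition: LM4} and~\ref{proposition: LM6}.
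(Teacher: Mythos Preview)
Your core idea in step 3 is exactly what the paper does: exploit the conjugation relation $tst^{-1}=s^a$ from Corollary~\ref{corollary: elementsgroup} at the level of $\varrho_V$, simplify the resulting scalar via Lemma~\ref{lemma: pm1} and the cocycle identity $c_{E^*}^N(\bar\tau,\bar\tau)=c_{E^*}^N(\bar\sigma\bar\tau,\bar\tau)c_{E^*}^N(\bar\sigma,\bar\tau)$, and deduce that $c_{E^*}^N(\bar\tau,\bar\tau)^2$ is a root of unity in $M^\times$, hence $\pm 1$. The paper reaches this by taking traces of the identity $\varrho_V(t)\varrho_V(s)\varrho_V(t)^{-1}=\pm\varrho_V(s^a)\,c_{E^*}^N(\bar\tau,\bar\tau)^2$ and plugging in the eigenvalue relations $\beta_s=\zeta_r\alpha_s$, $c_{E^*}^N(\bar\sigma,\bar\sigma)=\omega_r\alpha_s^2$ from the proof of Theorem~\ref{th: obstructions}; your spectral/determinant variant (antidiagonal $\varrho_V(t)$ in the eigenbasis of $\varrho_V(s)$, so conjugation equals $\det\varrho_V(s)\cdot\varrho_V(s)^{-1}$) is an equivalent bookkeeping and works just as well.

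Two points, however. First, your steps 1--2 are an unnecessary detour: the identity $c_{E^*}^N(\bar\tau,\bar\tau)=\pm\alpha^2$ with $\alpha$ an eigenvalue of $\varrho_V(t)$ never re-enters the argument, since step 3 alone already forces $c_{E^*}^N(\bar\tau,\bar\tau)^2\in\mu(M)=\{\pm1\}$. The paper does not touch $\varrho_V(t)^2$ at all. Second, your intermediate claim $\det\varrho_V(s)=\pm c_{E^*}^N(\bar\sigma,\bar\sigma)$ is not quite right when $r=6$: the ratio $\det\varrho_V(s)/c_{E^*}^N(\bar\sigma,\bar\sigma)=\zeta_r/\omega_r$ is an $r$-th root of unity, not necessarily $\pm1$. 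This does not hurt the conclusion (one only needs the ratio to be a root of unity), but it should be stated correctly. Relatedly, the case-by-case analysis over the four possible $\Gal(L/M)$ that you anticipate is avoided in the paper by working uniformly with the single congruence $a\equiv -1\pmod r$ and never passing through $\varrho_V(s)^{-1}$ explicitly; the trace formulation makes this uniform treatment immediate.
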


\begin{proof}
By Lemma~\ref{lemma: pm1}, we have that $c_{E^*}^N(g,g')=\pm c_{E^*}^N(g',g)$ for every $g,g'\in \Gal(N/M)$. Moreover, the 2-cocycle condition \eqref{twoccocycle} asserts that
$$
c_{E^*}^N(\bar\tau,\bar\tau)=c_{E^*}^N(\bar\tau,\bar\tau)c_{E^*}^N(\bar\sigma, 1)=c_{E^*}^N(\bar\sigma\bar\tau, \bar\tau)c_{E^*}^N(\bar\sigma,\bar\tau)\,.
$$
Then, we have 
\begin{equation}\label{equation: dihedral rel}
\begin{array}{lll}
\varrho_V(t)\varrho_V(s)\varrho_V(t)^{-1} & = & \varrho_V(t)\varrho_V(s)\varrho_V(t^{-1})c_{E^*}^N(\bar \tau,\bar\tau)=\\[6pt]
 & = & \varrho_V(ts)\varrho_V(t^{-1})c_{E^*}^N(\bar\tau,\bar\sigma)c_{E^*}^N(\bar\tau,\bar\tau)=\\[6pt]
 & = & \varrho_V(tst^{-1})c_{E^*}^N(\bar \tau\bar\sigma,\bar\tau)c_{E^*}^N(\bar\tau,\bar\sigma)c_{E^*}^N(\bar\tau,\bar\tau)=\\[6pt]
 & = & \pm \varrho_V(s^{a})c_{E^*}^N(\bar\tau,\bar\tau)^2\,.
\end{array}
\end{equation}
It is easy to observe that 
\begin{equation}\label{equation: powervarrho}
\varrho_V(s)^a= \varrho_V(s^a)c_{E^*}^N(\bar\sigma,\bar\sigma)^{(a-1)/2}\,.
\end{equation} 
Letting $\alpha$ and $\beta$ be the eigenvalues of $\varrho_V(s)$, taking traces of \eqref{equation: dihedral rel}, and applying \eqref{equation: powervarrho}, we obtain
$$
(\alpha+\beta)=\pm \left(\alpha^a+\beta^a \right)c_{E^*}^N(\bar\sigma,\bar\sigma)^{-(a-1)/2}c_{E^*}^N(\bar\tau,\bar\tau)^2
$$
But as in the proof of Theorem \ref{th: obstructions}, we have $\beta=\zeta_r\alpha$ and $c_{E^*}^N(\bar \sigma, \bar \sigma)=\omega_r\alpha^2$, where $\zeta_r$ and $\omega_r$ are $r$-th roots of unity and $\zeta_r$ is  primitive. This, together with the fact that $a\equiv -1 \pmod r$, permits to write the above equation as
$$
\pm \frac{1+ \zeta_r}{\omega_r^{-(a-1)/2}(1+\overline \zeta_r)} = c_{E^*}^N(\bar\tau,\bar\tau)^{2}\in (M^{\times})^2.
$$
One easily verifies that $(1+\zeta_r)/(1+\overline \zeta_r)$ is an $r$-th root of unity. Therefore, the left-hand side of the above equation is a root of unity in $M^\times $, and hence it must be $\pm 1$. 
\end{proof}

\section{Restriction of scalars of Gross $\Q$-curves}\label{sec: restriction of scalars}
For the convenience of the reader, in this section we review some results of Nakamura \cite{Nak04} on Gross $\Q$-curves, to which we refer for more details and proofs. 

Let $M$ be an imaginary quadratic field. Throughout this section, we make the following hypothesis.

\begin{hypothesis}\label{hypothesis: non-exceptional} 
\begin{enumerate}[i)]
\item $M$ is non-exceptional.
\item $M$ has class group isomorphic to $\cyc 2 \times \cyc 2$.
\end{enumerate}
\end{hypothesis}

\begin{remark}\label{remark: discriminants}
If $M$ has class group isomorphic to $\cyc 2 \times \cyc 2$, then the discriminant~$D$ of $M$ belongs to the set
\begin{align*}
\{-84, -120, -132, -168, -195, -228, -280, -312, -340, -372, -408, -435, -483,\\  -520, -532, -555, -595, -627, -708, -715, -760, -795, -1012, -1435  \}.
\end{align*}
This list can be easily obtained from \cite{Wat03}, for example. Among them, only $-340$ is exceptional.
\end{remark}

Then, by Proposition~\ref{proposition: McurveGrossCurve}, there exists a Gross $\Q$-curve $E$ with CM by $M$, which is thus completely defined over the Hilbert class field $H$ of  $M$. The aim of the present section is to describe Nakamura's method for computing the endomorphism algebra of the restriction of scalars of a Gross $\Q$-curve, which we will then apply to all Gross $\Q$-curves attached to $M$ satisfying Hypothesis~\ref{hypothesis: non-exceptional}. Our account of Nakamura's method will be only in the particular case where $M$ has class group $\cyc 2 \times\cyc 2$, which is the case of interest to us.

As seen in Section \ref{section: kcurvegendef}, one can associate to $E$ a cohomology class $\gamma_E:=\gamma_E^H$ in the group $H^2(\Gal(H/\Q), M^\times)$. The set of cohomology classes arising from Gross $\Q$-curves over $H$ has cardinality $8$ (cf. \cite[Proposition 4]{Nak04}), and we regard the set of Gross $\Q$-curves over $H$ as partitioned into $8$ equivalence classes according to their cohomology class.

Let $\Res_{H/M}(E)$ denote Weil's restriction of scalars of $E$. This variety is a priori defined over $M$, but it can be defined over $\Q$, in the sense that $\Res_{H/M}(E) \simeq (B_E)_M$ for some variety $B_E/\Q$. It turns out that  the endomorphism algebra  $\D_E = \End(B_E)$ only depends on the cohomology class $\gamma_E$ \cite[Proposition 6]{Nak04}. Nakamura devised a method for computing $\D_E$ in terms of the Hecke character attached to $E$, which he applied to compute all the endomorphism algebras arising in this way from Gross $\Q$-curves in the cases where $D=-84$ and $D=-195$. We extend his computation to the remaining $21$ non-exceptional discriminants of Remark~\ref{remark: discriminants}.

\subsection{Hecke characters of Gross $\Q$-curves}
The first step is to compute a set of Hecke characters whose associated elliptic curves represent all the equivalence classes of Gross $\Q$-curves.

\subsubsection*{Local characters} We begin by defining certain local characters that will be used to describe the Hecke characters. Let $\II_M$ be the group of ideles of $M$. If $\fp$ is a prime of $M$, we denote by $U_{\fp}=\cO_{M,\fp}^\times$ the group of local units. Also, for a rational prime~$p$ put $U_p=\prod_{\fp\mid p}U_\fp $.

Suppose that $p$ is odd and inert in $M$. Then define $\eta_p$ as the unique character $\eta_p\colon U_p\ra \{\pm 1\}$ such that $\eta_p(-1)= (-1)^{\frac{p-1}{2}}$. 

Suppose now that $2$ is ramified in $M$ and write $D=4m$. If $m$ is odd, then 
\begin{align*}
U_2/U_2^2 \simeq \left(\Z/2\Z \right)^3\simeq  \langle \sqrt{m},3-2\sqrt{m},5 \rangle.
\end{align*}
Define $\eta_{-4}\colon U_2\ra \{\pm1\}$ to be the character with kernel $\langle 3-2\sqrt{m},5 \rangle$. If $m$ is even then 
\begin{align*}
U_2/U_2^2 \simeq \left(\Z/2\Z \right)^3 \simeq \langle 1+\sqrt{m},-1,5 \rangle.
\end{align*}
Define $\eta_{8}$ to be the character with kernel $\langle 1+\sqrt{m},-1 \rangle $ and $\eta_{-8}$ the character with kernel $\langle 1+\sqrt{m},-5 \rangle$.

\subsubsection*{Hecke characters}
Let $U_M =\prod_\fp U_{\fp}$ be the maximal compact subgroup of $\II_M$. Let~$S$ be a finite set of primes of $M$ and put $U_S =\prod_{\fp\in S}U_\fp$. Suppose that $\eta\colon U_S\ra \{\pm 1\}$ is a continuous homomorphism such that $\eta(-1)=-1$. Next, we explain how to construct from $\eta$ a Hecke character $\phi: \II_M\ra \C^\times$ (not uniquely determined) that gives rise, in certain cases, to a Gross $\Q$-curve.

First of all, extend $\eta$ to a character that we denote by the same name $\eta\colon U_M\ra \{\pm 1\}$ by composing with the projection $U_M\ra U_S$. Now this character $\eta$ can be extended to a character  $\tilde\eta\colon U_MM^\times M_\infty^\times\lra \C^\times$ by imposing that
\begin{align}\label{eq: extension of eta}
  \tilde \eta(M^\times) = 1, \ \ \tilde\eta(z) = z^{-1} \text{ for } z\in M_\infty^\times.
\end{align}
Let $\phi\colon \II_M\ra \C^\times$ be a Hecke character that extends $\tilde \eta$ (there are $[H:M]=4$ such extensions, cf. \cite[p. 523]{Sh71}). For future reference, it will be useful to have the following formula for $\phi$ evaluated at certain principal ideals.
\begin{lemma}
Suppose that $(\alpha)$ is a principal ideal of $M$ such that $v_\fp (\alpha)= 0$ for all $\fp\in S$, and denote by $\alpha_S\in U_S$ the natural image of $\alpha$ in $U_S$. Then
\begin{align}\label{eq: phi at alpha}
  \phi((\alpha)) = \eta(\alpha_S)\alpha_\infty,
\end{align}
where $\alpha_\infty$ denotes the image of $\alpha$ in $M_\infty = \C$.  
\end{lemma}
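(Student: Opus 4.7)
The plan is to compute $\phi((\alpha))$ by writing it as $\phi(x)$ for an appropriate idele $x$ representing $(\alpha)$, and then exploit the triviality of $\phi$ on $M^\times$ to transfer the computation to the subgroup $U_M\cdot M^\times\cdot M_\infty^\times$ on which the character $\tilde\eta$ is explicitly given by \eqref{eq: extension of eta}.

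First, I would interpret $\phi((\alpha))$ via ideles. Since $v_\fp(\alpha)=0$ for every $\fp\in S$, a natural idele $x\in \II_M$ representing the ideal $(\alpha)$ is defined by $x_\fp=\alpha$ for every finite $\fp\notin S$, $x_\fp=1$ for $\fp\in S$, and $x_\infty=1$. Because $\phi$ is unramified at every prime not in $S$, the value $\phi(x)=\prod_{\fp\notin S}\phi(\pi_\fp)^{v_\fp(\alpha)}$ depends only on the ideal $(\alpha)$ and coincides with $\phi((\alpha))$ under the standard passage from Hecke characters to ideal characters. Next I would factor the diagonal embedding of $\alpha\in M^\times$ into $\II_M$ as $\alpha=x\cdot y$. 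A direct inspection shows that $y_\fp=1$ for finite $\fp\notin S$, $y_\fp=\alpha$ for $\fp\in S$, and $y_\infty=\alpha_\infty$. Thus the finite part of $y$ lies in $U_M$ (with image $\alpha_S$ under the projection $U_M\to U_S$), and the infinite part lies in $M_\infty^\times$, so $y\in U_M\cdot M_\infty^\times$.

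Now I would apply $\phi$ to the relation $\alpha=xy$. The left-hand side equals $1$, since $\phi$ extends $\tilde\eta$ and $\tilde\eta$ is trivial on $M^\times$ by construction. For the right-hand side, the explicit formulas \eqref{eq: extension of eta} give $\phi$ of the finite part of $y$ equal to $\eta(\alpha_S)$ (because $\tilde\eta|_{U_M}$ factors through the projection $U_M\to U_S$ and coincides there with $\eta$), and $\phi$ of the infinite part equal to $\alpha_\infty^{-1}$. Hence $\phi(y)=\eta(\alpha_S)\,\alpha_\infty^{-1}$, and therefore
\[
\phi((\alpha))\;=\;\phi(x)\;=\;\phi(y)^{-1}\;=\;\eta(\alpha_S)^{-1}\alpha_\infty\;=\;\eta(\alpha_S)\,\alpha_\infty,
\]
the last equality using $\eta(\alpha_S)\in\{\pm 1\}$.

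The argument is essentially an unwinding of definitions, so there is no real obstacle. The only mildly subtle point is the choice of the idele $x$: taking $x_\fp=1$ at the ramified primes $\fp\in S$ rather than $x_\fp=\alpha$ is exactly what lets us pin down $\phi(x)$ in spite of the ramification of $\phi$ on $U_S$, by pushing the $S$-component entirely into $y$ where $\tilde\eta$ is known.
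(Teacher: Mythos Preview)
Your proof is correct and follows essentially the same approach as the paper's: both arguments exploit that $\phi$ is trivial on the diagonal image of $M^\times$ to relate $\phi((\alpha))$ to the values of $\tilde\eta$ on the $U_S$- and $M_\infty^\times$-components of $\alpha$. The only cosmetic difference is that the paper carries out the computation by manipulating the product of local factors $\prod_\fq\phi_\fq(\alpha_\fq)$ directly, whereas you package the same manipulation as an explicit idele factorization $\alpha=xy$; the two are line-by-line equivalent.
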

\begin{proof}
 If we write $(\alpha)=\prod_{\fq\in T}\fq^{v_\fq(\alpha)}$, where $T$ denotes the support of $(\alpha)$, then
\begin{align*}
  \phi((\alpha)) = \prod_{\fq\in T}\phi_\fq(\alpha_\fq),
\end{align*}
where $\phi_\fq$ denotes the restriction of $\phi$ to $M_\fq$ and $\alpha_\fq $ the image of $\alpha $ in $M_\fq$. Observe that by hypothesis $S\cap T=\emptyset$, and that if $\fq\not\in S\cup T$, then $\phi_\fq(\alpha_\fq)=1$, since $\alpha_\fq$ belongs to $U_\fq$ and $\phi_{|U_{\fq}}=\tilde \eta_{|U_\fq}=1$. Therefore, we can write
\begin{align*}
  \phi((\alpha)) = \prod_{\fq\in T}\phi_\fq(\alpha_\fq)\prod_{\fq\not\in T}\phi_\fq(\alpha_\fq)\prod_{\fq\in S}\phi_\fq^{-1}(\alpha_\fq) = \left(\prod_{\fq}\phi_\fq(\alpha_\fq)\right)\eta(\alpha_S),
\end{align*}
where we have used that $\eta$ has order $2$. Then, by \eqref{eq: extension of eta} we have that
\begin{align*}
  \phi((\alpha)) = \left(\phi_\infty (\alpha_\infty)\prod_{\fq}\phi_\fq(\alpha_\fq)\right)\phi_\infty(\alpha_\infty)^{-1}\eta(\alpha_S) = \phi(\alpha) \alpha_\infty \eta(\alpha_S)=\alpha_\infty \eta(\alpha_S).
\end{align*}
\end{proof}
Define now a Hecke character of $H$ by means of $\psi = \phi\circ \Nm_{H/M}$, where 
$$\Nm_{H/M}\colon \II_H\ra \II_M
$$ 
denotes the norm on ideles.  By a result of Shimura \cite[Proposition 9]{Sh71}, the Hecke character $\psi$ is attached to a Gross $\Q$-curve if and only if $\bar\phi = \phi$, where the bar denotes the action of complex conjugation.

 For example, if $D$ has some prime factor $q\equiv 3 \pmod 4$, put $\eta_0 = \eta_q$. If all the odd primes dividing $D$ are congruent to $1$ modulo $4$, then $D=8m$ for some odd $m$ and we define $\eta_0$ to be $\eta_{-8}$. If we denote by $\phi_0\colon \II_M\ra\C^\times$ a Hecke character attached to $\eta_0$ by the above construction, then the Hecke character $\psi_0=\phi_0\circ\Nm_{H/M}$ is the Hecke character attached to a Gross $\Q$-curve over $H$.

Let $W$ be the set of characters $\theta\colon U_M\ra \{\pm 1\}$ such that $\theta(-1)=1$ and $\bar\theta = \theta$. Denote also by $W_0$ the set of $\theta\in W$ such that $\theta = \kappa\circ \Nm_{M/\Q}$ for some Dirichlet character $\kappa$. By \cite[Proposition 3]{Nak04}, the group $W/W_0$ is generated by two characters that  can be described explicitly in terms of the characters $\eta_p, \eta_{-4},\eta_{-8}$, and $\eta_{8}$. More precisely:
\begin{enumerate}
\item If $D=-pqr$ with $p$, $q$, and $r$ primes congruent to $3$ modulo $4$, then $W/W_0 = \langle \eta_p\eta_q,\eta_p\eta_r \rangle $.
\item If $D=-pqr$ with $p$ and $q$ primes congruent to $1$ modulo $ 4$, and $r\equiv 3 \pmod 4$, then $W/W_0 = \langle \eta_p, \eta_q\rangle $.
\item If $D =-4pq$ with $p$ and $q$ congruent to $3
$ modulo $ 4$, then $W/W_0 = \langle \eta_{-4},\eta_p\eta_q\rangle $.
\item If $D =-8pq$ with $p$ and $q$ congruent to $3$ modulo $ 4$ then $W/W_0 = \langle \eta_{-8}\eta_p,\eta_{-8}\eta_q\rangle $.
\item If $D =-8pq$ with $p\equiv 1\pmod 4$ and $q\equiv 3\pmod 4$ then $W/W_0 = \langle \eta_{8},\eta_p\rangle $.
\item If $D = -8pq$ with $p$ and $q$ congruent to $1$ modulo $4$, then $W/W_0 = \langle \eta_{p},\eta_q\rangle $.
\
\end{enumerate}
Denote by $\tilde \omega_1,\tilde \omega_2$ the generators of $W/W_0$, and define $\omega_i=\tilde \omega_i\circ \Nm_{H/M}$.

Now let $k/H$ be a quadratic extension such that $k/\Q$ is Galois and $k/M$ is non-abelian. Such quadratic extensions exist by \cite[Theorem 1]{Nak04}. Denote by $\chi\colon \II_H\ra \{\pm 1\}$ the Hecke character attached to $k/H$.

By \cite[Theorem 2]{Nak04}, the eight equivalence classes of $\Q$-curves over $H$  are represented by the Hecke characters $\psi_0\cdot \omega$ with $\omega \in \langle \omega_1,\omega_2,\chi \rangle $. Observe that, in particular, this set of Hecke characters does not depend on the choice of $k$ (any $k$ which is Galois over $\Q$ and non-abelian over $M$ will produce the same set of Hecke characters).

\subsection{Method for computing the endomorphism algebra}

Let $\fp_1$ and $\fp_2$ be prime ideals of $M$ that generate the class group and that are coprime to the conductors of $\psi_0$, $\omega_1$, $\omega_2$, and $\chi$. Let $L_i$ be the decomposition field of $\fp_i$ in $H$, and $F_i$ the maximal totally real subfield of $L_i$. 

Suppose that $E$ is a Gross $\Q$-curve over $H$ with Hecke character of the form $\psi = \psi_0 \omega_1^a \omega_2^b$ for some $a,b\in\{0,1\}$. We can write $\psi=\phi\circ \Nm_{H/M}$, where $\phi=\phi_0\tilde \omega_1^a\tilde \omega_2^b$. Then $\phi(\fp_i) + \phi(\bar\fp_i)$ generates a quadratic number field  $\Q(\sqrt{n_i})$, and the endomorphism algebra  $\D_E=\End(B_E)$ is isomorphic to the biquadratic field $\Q(\sqrt{n_1},\sqrt{n_2})$ (cf. \cite[Proposition 7, Theorem 3]{Nak04}). 
\begin{remark}
  Observe that $\phi(\fp_i) + \phi(\bar\fp_i)$ can be computed if one knows the two quantities $\phi (\fp_i^2)$ and $\phi(\fp_i\bar\fp_i)$. Since $\fp_i^2$ and $\fp_i\bar\fp_i$ are principal, one can compute $\phi (\fp_i^2)$ and $\phi(\fp_i\bar\fp_i)$ by means of  \eqref{eq: phi at alpha}.
\end{remark}

Suppose now that the Hecke character of $E$ is of the form $\psi=\psi_0 \chi\omega_1^a  \omega_2^b$. Then $\D_E$ is a quaternion algebra over $\Q$, say $\D_E\simeq \left( \frac{t_1,t_2}{\Q}\right)$. The $t_i$ can be computed as follows (see \cite[Proposition 7]{Nak04}). First of all, let $n_1$ and $n_2$ be the rational numbers defined as in the previous paragraph for the character $\psi/\chi = \psi_0\omega_1^a\omega_2^b$.
\begin{enumerate}
\item Suppose that $\Gal(k/L_i)\simeq \cyc 2 \times \cyc 2$ . Then:
  \begin{enumerate}
  \item If $k/F_i$ is abelian then $t_i = n_i$.
\item If $k/F_i$ is non-abelian, then $t_i = D/n_i$.
  \end{enumerate}
\item Suppose that $\Gal(k/L_i)\simeq \cyc 4$. Then:
  \begin{enumerate}
  \item If $k/F_i$ is abelian, then $t_i=-n_i$.
\item If $k/F_i$ is non-abelian, then $t_i = -D/n_i$.
  \end{enumerate}
\end{enumerate}

\subsection{Computations and tables}
For each of the $23$ non-exceptional imaginary quadratic fields of class group $\cyc 2 \times \cyc 2$, we have computed the $8$ endomorphism algebras arising from restriction of scalars of Gross $\Q$-curves. The results are displayed in Table \ref{table}. The notation is as follows: for the biquadratic fields, the notation $(a,b)$ indicates the field $\Q(\sqrt{a},\sqrt{b})$; for the quaternion algebras, we write the discriminant of the algebra.

For a Gross $\Q$-curve $E$, recall that we denote by $B_E$ the abelian variety over~$\Q$ such that $\Res_{H/M}E\sim (B_E)_M$. Since $B_E$ is isogenous to its quadratic twist over~$M$, this implies that
\begin{align*}
  \Res_{H/\Q}E \sim (B_E)^2.
\end{align*}
We observe in Table \ref{table} that for all discriminants except $-195$, $-312$, $-555$, $-715$, and $-760$, at least one of the quaternion algebras is the split algebra $\M_2(\Q)$ of discriminant $1$. This implies that for the corresponding Gross $\Q$-curve $E$ the variety~$B_E$ decomposes as
\begin{align*}
  B_E\sim A^2,
\end{align*}
with $A/\Q$ an abelian surface. Therefore, $\Res_{H/\Q}E$ decomposes as the fourth power of an abelian surface.

On the other hand, for the discriminants $-195$, $-312$, $-555$, $-715$, and $-760$ we see that $B_E$ is always simple: its endomorphism algebra is either a biquadratic field or a quaternion division algebra over $\Q$. Therefore, $\Res_{H/\Q}E\sim W^2$ for some simple variety $W$ of dimension $4$. We record these findings in the following statement.
\begin{theorem}\label{th: nakamura table}
Let $M$ be an imaginary quadratic field of discriminant $D$ and Hilbert class field $H$. Suppose that $D$ is non-exceptional and that $\Gal(H/M)\simeq \cyc 2\times \cyc 2$. If $D\neq -195, -312, -555, -715, -760$, there exists a Gross $\Q$-curve $E/H$ such that 
  \begin{align*}
    \Res_{H/\Q}E \sim A^4, \text{ for some simple abelian surface } A/\Q.
  \end{align*}
If $D=-195, -312,-555,-715, -760$, then for every Gross $\Q$-curve $E/H$ we have that 
  \begin{align*}
    \Res_{H/\Q}E \sim W^2, \text{ for some simple abelian variety } W/\Q \text{ of dimension } 4.
  \end{align*}

\end{theorem}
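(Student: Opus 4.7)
The plan is to implement Nakamura's algorithm reviewed in Section~\ref{sec: restriction of scalars} for each of the $23$ non-exceptional discriminants listed in Remark~\ref{remark: discriminants}, record the eight endomorphism algebras $\D_E$ that arise from Gross $\Q$-curves over $H$, and read off the claim from the resulting Table~\ref{table}. For each such $D$, the generators $\tilde\omega_1,\tilde\omega_2$ of $W/W_0$ are dictated by the congruence shape of $D$ via the case-by-case description (1)--(6), and a base local character $\eta_0$ is chosen (namely $\eta_q$ for a prime $q\equiv 3\pmod 4$ dividing $D$, or $\eta_{-8}$ otherwise), yielding the base Hecke character $\psi_0=\phi_0\circ \Nm_{H/M}$. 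A quadratic extension $k/H$ with $k/\Q$ Galois and $k/M$ non-abelian is then fixed, so that the eight equivalence classes of Hecke characters attached to Gross $\Q$-curves over $H$ are parametrized by $\psi_0\omega_1^a\omega_2^b\chi^c$ with $a,b,c\in\{0,1\}$.

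For each such $\psi$ I would pick primes $\fp_1,\fp_2$ of $M$ generating $\Gal(H/M)\simeq \cyc 2\times \cyc 2$ and coprime to all the conductors involved. Since $\fp_i^2$ and $\fp_i\bar\fp_i$ are principal, formula~\eqref{eq: phi at alpha} computes $\phi(\fp_i^2)$ and $\phi(\fp_i\bar\fp_i)$ from local unit data and the archimedean embedding of a chosen generator; from these, $\phi(\fp_i)+\phi(\bar\fp_i)$ is extracted, giving an integer $n_i$ with $\Q(\phi(\fp_i)+\phi(\bar\fp_i))=\Q(\sqrt{n_i})$. The endomorphism algebra $\D_E=\End(B_E)$ then follows from Nakamura's recipe: $\D_E\simeq \Q(\sqrt{n_1},\sqrt{n_2})$ when $c=0$, and $\D_E$ is a quaternion algebra $(t_1,t_2/\Q)$ when $c=1$, with each $t_i\in\{\pm n_i,\pm D/n_i\}$ determined by whether $\Gal(k/L_i)$ is $\cyc 2\times \cyc 2$ or $\cyc 4$ and by whether $k/F_i$ is abelian.

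Once Table~\ref{table} is assembled, the theorem reduces to inspection. Since $B_E$ is isogenous over $M$ to its quadratic twist one has $\Res_{H/\Q}E\sim B_E^2$, and $B_E$ is a fourfold whose isogeny decomposition is determined by $\End(B_E)$. For the $18$ discriminants in \eqref{eq: set} the table will exhibit at least one Gross $\Q$-curve with $\D_E\simeq \M_2(\Q)$, which forces $B_E\sim A^2$ for a simple abelian surface $A/\Q$ with $\End(A)=\Q$, and hence $\Res_{H/\Q}E\sim A^4$. For the five excluded discriminants, every entry of the table is either a biquadratic field or a quaternion division algebra over $\Q$; in both cases $\End(B_E)$ is a division algebra, so $B_E$ itself is simple and $\Res_{H/\Q}E\sim W^2$ with $W=B_E$.

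The only real obstacle is computational: carrying out the eight character evaluations across $23$ fields requires careful bookkeeping of principal generators, local unit contributions, and the local behaviour of the auxiliary field $k$ at the primes $L_i$ and $F_i$. Beyond the results already collected in Section~\ref{sec: restriction of scalars} no new theoretical input is needed; the delicate point is to select $k$ so that the quaternion prescription can be applied unambiguously, recalling that the resulting set of algebras is independent of this choice by \cite[Theorem 2]{Nak04}.
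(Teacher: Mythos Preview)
Your proposal is correct and follows essentially the same approach as the paper: both rely on carrying out Nakamura's algorithm for each of the $23$ non-exceptional discriminants, recording the resulting eight endomorphism algebras in Table~\ref{table}, and then reading off the dichotomy according to whether $\M_2(\Q)$ appears among the quaternion algebras. The paper likewise notes that $\Res_{H/\Q}E\sim B_E^2$ and that $\D_E\simeq \M_2(\Q)$ forces $B_E\sim A^2$, while a division-algebra $\D_E$ forces $B_E$ to be simple; your added remark that $\End(A)=\Q$ (whence $A$ is simple) is a harmless sharpening.
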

\begin{remark}
  As mentioned above, the cases of $D=-84$ and $D=-195$ were already computed by Nakamura (\cite[\S 6]{Nak04} ). We note what appears to be a typo in Nakamura's table in page $647$: the last biquadratic field should be $\Q(\sqrt{-14},\sqrt{42})$, instead of $\Q(\sqrt{-14},\sqrt{-42})$.
\end{remark}

We have used the software Sage \cite{sage} and Magma \cite{magma} to perform the computations of Table \ref{table}. The interested reader can find the code we used in \url{https://github.com/xguitart/restriction_of_scalars_of_Q_curves}.

\renewcommand{\arraystretch}{1.5}

\begin{table}
\begin{tabular}{c|c|c}
$D$ & Biquadratic fields & Quaternion Algebras \\
\hline
$-84$ & $(-14,-2),(-6,2),(-6,-42),(-14,42)$& $2,1,2,1$\\
\hline
$-120$ & $(-5,10),(5,-10),(-5,-10),(5,10)$& $1,6,3,1$\\
\hline
$-132$ & $(22,-2),(-6,-2),(6,-66),(-22,-66)$& $1,2,1,2$\\
\hline
$-168$ & $(-14,-2),(3,-21),(14,21),(-3,2)$& $2,1,1,1$\\
\hline
$-195$ & $(13,-5),(-13,-5),(-13,5),(13,5)$& $13,39,26,39$\\
\hline
$-228$ & $(-38,-2),(6,-2),(-6,-114),(38,-114)$& $2,1,2,1$\\
\hline
$-280$ & $(-10,-5),(-10,5),(10,-5),(10,5)$& $2,1,14,14$\\
\hline
$-312$ & $(13,-26),(-13,26),(-13,-26),(13,26)$& $13,39,26,39$\\
\hline
$-372$ & $(-62,31),(-6,-3),(-6,31),(-62,-3)$& $2,1,2,1$\\
\hline
$-408$ & $(-17,34),(-17,-34),(17,-34),(17,34)$& $2,1,1,1$\\
\hline
$-435$ & $(-29,-5),(-29,5),(29,-5),(29,5)$& $2,1,1,1$\\
\hline
$-483$ & $(-23,7),(23,-69),(-21,-7),(21,69)$& $2,1,1,1$\\
\hline
$-520$ & $(-13,-5),(13,-5),(-13,5),(13,5)$& $1,1,1,2$\\
\hline
$-532$ & $(-38,-19),(-14,7),(-14,-19),(-38,7)$& $1,2,1,2$\\
\hline
$-555$ & $(37,-5),(-37,-5),(-37,5),(37,5)$& $37,111,74,111$\\
\hline
$-595$ & $(-17,85),(17,-85),(-17,-85),(17,85)$& $7,1,1,14$\\
\hline
$-627$ & $(19,-11),(-19,-57),(-33,11),(33,57)$& $1,2,1,1$\\
\hline
$-708$ & $(118,-59),(-6,3),(6,-59),(-118,3)$& $1,2,1,2$\\
\hline
$-715$ & $(-13,-65),(13,-65),(-13,65),(13,65)$& $5,10,55,55$\\
\hline
$-760$ & $(-10,5),(10,-5),(-10,-5),(10,5)$& $5,95,10,95$\\
\hline
$-795$ & $(-53,-5),(53,-5),(-53,5),(53,5)$& $6,1,1,3$\\
\hline
$-1012$ & $(-46,23),(-22,-11),(-22,23),(-46,-11)$& $2,1,2,1$\\
\hline
$-1435$ & $(-41,205),(-41,-205),(41,-205),(41,205)$& $2,1,1,1$\\

\end{tabular}\caption{Endomorphism algebras of the restriction of scalars of Gross $\Q$-curves. For the biquadratic fields, the notation $(a,b)$ indicates the field $\Q(\sqrt{a},\sqrt{b})$; for the quaternion algebras, we write the discriminant of the algebra}\label{table}
\end{table}

\clearpage

\section{Proof of the main theorems}\label{section: main proofs}
We begin with a Lemma that will be used in the proof of Theorem \ref{theorem: main}.
\begin{lemma}\label{lemma: p mid D}
  Let $E$ be a Gross $\Q$-curve with CM by a field $M$ of discriminant~$D$, and suppose that $\Gal(H/M)$ is isomorphic to $ \cyc 2 \times \cyc 2$. Denote by $\gamma_E^H$ the class in $ H^2(\Gal(H/M),M^\times)$ attached to $E$, and by $c_E$ a cocycle representing $\gamma_E^H$. If $\sigma\in\Gal(H/M)$ is non-trivial, then $\pm d\cdot c_E(\sigma,\sigma)\in  (M^{\times})^2$ for some divisor $d$ of $ D$ such that $d$ is not a square in $M^\times$.
\end{lemma}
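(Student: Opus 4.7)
The plan is to compute $c_E(\sigma,\sigma)$ for a specific choice of isogenies $\mu_\sigma$ and identify it modulo $(M^\times)^2$ with the generator of a principal $\cO_M$-ideal supported at ramified primes; since any two cocycles representing $\gamma_E^H$ differ by a coboundary and a coboundary $(f(\sigma)f(\tau)f(\sigma\tau)^{-1})$ evaluates at $(\sigma,\sigma)$ (with $\sigma^2=1$) to a square, the class of $c_E(\sigma,\sigma)$ modulo $(M^\times)^2$ does not depend on the choice of representative. The key input from complex multiplication is that for any integral ideal $\fa$ of $\cO_M$ whose class $[\fa]\in\mathrm{Cl}(M)$ corresponds to $\sigma$ under the Artin isomorphism $\Gal(H/M)\simeq\mathrm{Cl}(M)$, one can choose an isogeny $\mu_\sigma\colon {}^\sigma E\to E$ of degree $\Nm(\fa)$.

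Normalizing $\mu_1=\mathrm{id}_E$ and using $\sigma^2=1$, the cocycle collapses on the diagonal to
\begin{equation*}
c_E(\sigma,\sigma)=\mu_{\sigma^2}\circ {}^\sigma\mu_\sigma^{-1}\circ\mu_\sigma^{-1}=(\mu_\sigma\circ {}^\sigma\mu_\sigma)^{-1},
\end{equation*}
so setting $\lambda_\sigma:=\mu_\sigma\circ {}^\sigma\mu_\sigma\in\End(E)=\cO_M$ gives $\Nm(\lambda_\sigma)=\deg(\mu_\sigma)^2=\Nm(\fa)^2$. Classical genus theory now enters: since $\mathrm{Cl}(M)\simeq\cyc 2\times\cyc 2$ is entirely $2$-torsion, it is generated by the classes of the three ramified primes $\fp_{p_1},\fp_{p_2},\fp_{p_3}$ of $M$ (with $p_i\mid D$), so for each non-trivial $\sigma$ I can take $\fa=\prod_{p\in I}\fp_p$ for some non-empty subset $I$ of the prime divisors of $D$. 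Writing $d:=\Nm(\fa)=\prod_{p\in I}p$, a divisor of $D$, the identities $\fp_p^2=(p)$ give $\fa^2=(d)$ and $\Nm(\lambda_\sigma)=d^2$.

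Because $d$ is a product of ramified rational primes, the only $\cO_M$-ideal of norm $d^2$ is $(d)=\prod_{p\in I}\fp_p^2$, so $(\lambda_\sigma)=(d)$. The assumption $\mathrm{Cl}(M)\simeq\cyc 2\times\cyc 2$ excludes $M=\Q(i)$ and $M=\Q(\sqrt{-3})$, whence $\cO_M^\times=\{\pm 1\}$, and therefore $\lambda_\sigma=\pm d$. Taking inverses and using $d^{-1}\equiv d\pmod{(M^\times)^2}$ yields $c_E(\sigma,\sigma)\equiv \pm d\pmod{(M^\times)^2}$, i.e.\ $\pm d\cdot c_E(\sigma,\sigma)\in (M^\times)^2$ for the appropriate sign. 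Finally, if $d=m^2$ with $m\in M^\times$, then $(m)^2=(d)=\fa^2$, and unique factorization of fractional ideals in the Dedekind domain $\cO_M$ would force $(m)=\fa$, making $\fa$ principal---a contradiction to $\sigma\neq 1$. Hence $d\notin (M^\times)^2$, completing the argument.

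The main obstacle is the complex-multiplication input: the existence of an isogeny $\mu_\sigma$ of degree exactly $\Nm(\fa)$ compatible with the $\cO_M$-action, together with the matching of the ideal class $[\fa]$ with the Artin symbol of $\sigma$. This requires standard but careful Shimura--Taniyama bookkeeping on the lattice side; once granted, the remaining steps are a routine combination of genus theory and unique factorization in $\cO_M$.
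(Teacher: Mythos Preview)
Your proof is correct and follows essentially the same route as the paper's. Both arguments (i) observe that two representatives of $\gamma_E^H$ differ on the diagonal $(\sigma,\sigma)$ by a square, (ii) compute $c(\sigma,\sigma)$ for one well-chosen system of isogenies, showing that the ideal it generates is the square of an ideal in the class corresponding to $\sigma$, and (iii) use that this class is represented by a product of ramified primes, so that the square is $(d)$ with $d\mid D$ and $d\notin (M^\times)^2$.

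The only notable implementation difference is the choice of ideal in step~(ii). The paper picks an \emph{unramified} prime $\mathfrak q$ in the right class, uses the $\mathfrak q$-multiplication isogenies, and cites Nakamura for the identity $c(\sigma_{\mathfrak q},\sigma_{\mathfrak q})\,\mathcal O_M=\mathfrak q^2$; it then passes to a generator of $\mathfrak q^2$, which modulo squares is $\pm p_i$, $\pm p_j$, or $\pm p_ip_j$. You instead take $\mathfrak a=\prod_{p\in I}\mathfrak p_p$ at the ramified primes themselves and recover the same conclusion via the degree computation $\operatorname N_{M/\mathbb Q}(\lambda_\sigma)=d^2$ together with the observation that, since every $p\in I$ is ramified, $(d)$ is the unique integral ideal of norm $d^2$. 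Your version is a bit more self-contained (no external citation for the key ideal identity); the paper's version avoids having to justify the CM isogeny at ramified ideals by staying at an unramified prime. Either way the content is the same.
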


\begin{proof}
  Let $\cO_M$ denote the ring of integers of $M$. Denote by $p_1,p_2,p_3$ the primes dividing $D$. Observe that $p_i\cO_M = \fp_i^2$, with $\fp_i$ a non-principal prime ideal of $\cO_M$. It is clear that we can always find $p_i,p_j$ such that $\pm p_ip_j$ is not a square in $M^\times$, and therefore $\fp_i\fp_j$ is not principal.  Thus $\fp_i,\fp_j$ generate the class group. Therefore, we can assume that any non-trivial element of $\Gal(H/K)$ is of the form $\sigma_\fq$ for some unramified prime $\fq$ which is equivalent to either $\fp_i$, $\fp_j$ or $\fp_i\cdot\fp_j$. Here $\sigma_\fq$ stands for the Frobenius automorphism of $H/K$ at $\fq$. 

Now we argue (and use the same notation) as in \cite[Proof of Theorem 3]{Nak04}. Namely, denote by $u(\fq)$ the $\fq$-multiplication isogenies
\begin{align*}
  u(\fq): \acc{\sigma_{\fq}}E\lra E,
\end{align*}
and denote by $c$ the $2$-cocycle associated to $E$ using the system of isogenies $u(\fq)$ (together with the identity isogeny for $1\in \Gal(H/M)$). Note that $c_E$ is any cocycle representing $\gamma_E^H$, and it may be different from $c$. But in any case they are cohomologous, which in particular implies that 
\begin{align}\label{eq: c and cE}
  c(\sigma_\fq,\sigma_\fq)=b_{\fq}^2\cdot c_E(\sigma_\fq,\sigma_\fq)\text{ for some }b_{\fq}\in M^\times.
\end{align}
From display $(6)$ and the display after that of loc. cit., since the order $n$ of $\sigma_\fq$ is $2$ in our case, we see that 
\begin{align*}
  c(\sigma_\fq,\sigma_\fq)\cO_M =\fq^2. 
\end{align*}
The proof is finished by observing that $\fq^2=\alpha\cO_M$, where $\alpha\in M^\times$ is, up to an element of $(M^\times)^2$, equal to $\pm p_i$, $\pm p_j$, or $\pm p_i\cdot p_j$.
\end{proof}

\subsubsection*{Proof of  Theorem~\ref{theorem: main}}
For all the quadratic imaginary fields not listed in \eqref{equation: fields}, we have constructed in the first part of Theorem \ref{th: nakamura table} abelian surfaces defined over $\Q$ satisfying the hypothesis of the theorem. To rule out the remaining 6 fields, we proceed in the following way.

Let $M$ be one of the fields in the list \eqref{equation: fields} and suppose that an abelian surface $A$ satisfying the hypothesis of the theorem exists for $M$. Resume the notations from Section~\ref{section: crepCMsq}. As $\Gal(H/M)\simeq \cyc 2 \times \cyc 2$ and $H\subseteq K$ (by \cite[Theorem 2.14]{FG18}), the only possibilities for $\Gal(K/M)$ are $\cyc 2 \times \cyc 2$, $\dih 4$, and $\dih 6$. 

Suppose that $\Gal(K/M)$ is $\cyc 2 \times \cyc 2$. Then $K=H$ and thus $E$ is a Gross $\Q$-curve. By Proposition~\ref{proposition: McurveGrossCurve}, we have that $M$ is not exceptional and thus we cannot have $M=\Q(\sqrt{-340})$. For the other possibilities for $M$, we have seen in the second part of Theorem \ref{th: nakamura table} that $\Res_{H/\Q} E$ does not have any simple factor of dimension $2$, but this is a contradiction with the fact that $A$ should be a factor of $\Res_{H/\Q} E$ (indeed, the universal property of Weil's restriction of scalars implies that $\Hom(A,\Res_{H/\Q}E)=\Hom(A_H,E)\simeq M^2$, and thus $\Hom(A,\Res_{H/\Q}E)\neq 0$). 

Suppose that $\Gal(K/M)$ is $\dih 4$ or $\dih 6$. Resume the notations of Section~\ref{section: obstructions}. Let~$E^*$ be a Ribet $M$-curve completely defined over $H$ with CM by $M$ which we chose as in Corollary~\ref{corollary: elementsgroup} (and which exists because of Proposition~\ref{proposition: McurveGrossCurve}). Note that Hypothesis~\ref{hypothesis: forta} is satisfied. Then, by Theorem~\ref{theorem: dihedralmagic}, there is a non-trivial element $\overline \tau\in \Gal(N/M)=\Gal(H/N)$ such that 
\begin{align}\label{eq: c(tau,tau)}
 c_{E^*}^H(\overline \tau,\overline \tau)= \pm 1 .
\end{align}
If $M$ is non-exceptional, as noted in Remark~\ref{remark: ele}, we can suppose that $E^*$ is in fact a Gross $\Q$-curve. Then \eqref{eq: c(tau,tau)} is a contradiction with Lemma \ref{lemma: p mid D}.

It remains to show that \eqref{eq: c(tau,tau)} also brings a contradiction if $M=\Q(\sqrt{-340})$ is the exceptional field. Put $T=H^{\langle \bar\tau \rangle}$, the fixed field by $\bar\tau$. Observe that $M\subsetneq T\subsetneq H$. If $c_{E^*}^H(\bar\tau,\bar\tau)=1$ then by Theorem~\ref{theorem: Weilsdescend} the curve $E^*$ is isogenous to a curve defined over $T$, and this is a contradiction with the fact that $M(j_{E^*})=H$.

Suppose now that $c_{E^*}^H(\bar\tau,\bar\tau)=-1$. We will see that we can apply the above argument to an appropriate quadratic twist of $E^*$. 
\begin{claim}\label{claim}
 There exists a quadratic extension $S/H$ such that $S/M$ is Galois with $\Gal(S/M)\simeq \dih 4$ and such that $\bar \tau$ lifts to an element of order $4$ of $\Gal(S/M)$. 
\end{claim}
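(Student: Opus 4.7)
The plan is to recognize the claim as a Galois embedding problem over $M=\Q(\sqrt{-340})=\Q(\sqrt{-85})$ and to verify that the corresponding obstruction in $\Br(M)[2]$ vanishes. Set $V:=\Gal(H/M)\simeq \cyc 2\times \cyc 2$. Since the exponent of $\mathrm{Cl}(M)$ divides $2$, the Hilbert class field $H$ coincides with the genus field of $M$, so $H=\Q(i,\sqrt 5,\sqrt{17})$ and the three quadratic subfields of $H/M$ are $M(i)$, $M(\sqrt 5)$, $M(\sqrt{17})$. Write $T_j=M(\sqrt{d_j})$ with $d_j\in\{-1,5,17\}$, labelled so that $T_1=H^{\bar\tau}$, and let $d_2,d_3$ denote the two remaining discriminants.

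First I would observe that the existence of $S$ as in the claim is equivalent to the solvability of the central Galois embedding problem
\begin{align*}
1\lra \cyc 2\lra \dih 4\lra V\lra 1
\end{align*}
over $M$, where the class $\gamma_{\bar\tau}\in H^2(V,\mathbb{F}_2)$ of the extension is the one realizing $\dih 4$ with $\bar\tau$ as the element that lifts to order $4$. A direct cocycle computation, using explicit lifts in $\dih 4$ and computing the diagonal values $c(g,g)$ for the three non-trivial $g\in V$, identifies $\gamma_{\bar\tau}$ with the cup product $\psi_2\cup\psi_3$, where $\psi_j\in H^1(V,\mathbb{F}_2)$ is the character cutting out $T_j/M$. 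Under the inflation $H^2(V,\mathbb{F}_2)\to H^2(G_M,\mathbb{F}_2)=\Br(M)[2]$, the obstruction therefore becomes the class of the quaternion algebra $(d_2,d_3)_M$.

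I would then verify the triviality of $(d_2,d_3)_M$. For $\{d_2,d_3\}\in\{\{-1,5\},\{-1,17\}\}$ the algebra is already split over $\Q$, because $5,17\equiv 1\pmod 4$, and so its base change to $M$ is trivial. For $\{d_2,d_3\}=\{5,17\}$, one exploits the identity $-85\equiv 1\pmod{(M^\times)^2}$, which gives $17\equiv -5\pmod{(M^\times)^2}$ and therefore $(5,17)_M=(5,-5)_M=1$ by the standard relation $(a,-a)_F=1$ valid for any field $F$. In every case the obstruction vanishes, so the embedding problem admits a solution $\tilde\pi\colon G_M\to \dih 4$. Since the extension $\dih 4\twoheadrightarrow V$ does not split (no subgroup of $\dih 4$ maps isomorphically onto $V$), the lift $\tilde\pi$ is necessarily surjective, and the fixed field $S$ of $\ker\tilde\pi$ is a quadratic extension of $H$ with $\Gal(S/M)\simeq \dih 4$ and $\bar\tau$ lifting to an element of order $4$. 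The conceptual core of the argument is the identification $\gamma_{\bar\tau}=\psi_2\cup\psi_3$; the triviality of $(d_2,d_3)_M$ is then an elementary verification.
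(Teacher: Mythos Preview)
Your argument is correct and follows essentially the same route as the paper: both recognize the claim as a $\dih 4$-embedding problem over $M$ with biquadratic kernel field $H=M(\sqrt a,\sqrt b)$, identify the obstruction as a quaternion class in $\Br(M)[2]$, and verify that it vanishes for each of the three choices of $T=H^{\bar\tau}$. Your cup-product description $\gamma_{\bar\tau}=\psi_2\cup\psi_3$, giving $(d_2,d_3)_M$, coincides with the paper's formula $(a,ab)_M$ (cited from Ledet) since $\{d_2,d_3\}\equiv\{a,ab\}$ in $M^\times/(M^\times)^2$; you are simply more explicit both in deriving the obstruction and in checking the splittings, whereas the paper just writes down the three algebras and asserts they split.
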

We now show how this claim allows us to produce the appropriate twisted curve (and we will prove the claim later on). Define $C$ to be the $S/H$ quadratic twist of~$E^*$. By \cite[Lemma 3.13]{FG18}, the curve $C$ is an $M$-curve completely defined over~$H$ and the cohomology classes of $E^*$ and $C$ are related by
\begin{align*}
  \gamma_{C}^H = \gamma_{E^*}^H\cdot \gamma_{S},
\end{align*}
where $\gamma_S\in H^2(\Gal(H/M),\{\pm 1\})$ is the cohomology class attached to the exact sequence
\begin{align}\label{eq: emb prob}
  1 \lra \Gal(S/H)\simeq \{\pm 1\}\lra \Gal(S/M)\simeq \dih 4 \lra \Gal(H/M)\lra 1.
\end{align}
If we identify $\Gal(S/M)\simeq \langle s,t|s^4,t^2,stst \rangle$, then $\Gal(S/H)$ can be identified with the subgroup generated by $s^2$ and we can assume that $\bar \tau$ lifts to $s$. Let $c_S$ be a cocycle representing $\gamma_S$. The usual construction that associates a cohomology class to \eqref{eq: emb prob} gives that $c_{S}(\bar\tau,\bar\tau)=s\cdot s$. Since $s^2$ is the non-trivial element of $\Gal(S/H)$, it corresponds to $-1$ under the isomorphism $\Gal(S/H)\simeq \{\pm 1\}$, so that $c_{S}(\bar\tau,\bar\tau)=-1$.

We conclude that $c_{C}^H(\bar\tau,\bar\tau)= c_{E^*}^H(\bar\tau,\bar\tau)c_{S}(\bar\tau,\bar\tau)= 1$, and as before this implies that $C$ can be defined over $T$, which is a contradiction.

{\bf Proof of Claim \ref{claim}.} The Hilbert class field of M is $H=\Q(i,\sqrt{5},\sqrt{17})$. If we write $H = M(\sqrt{a},\sqrt{b})$ and suppose that $\bar\tau(\sqrt{b})=\sqrt b$, it is well known (see, e.g. \cite[\S0.4]{Arne}) that the obstruction to the existence of $S$ is given by the quaternion algebra $\left( \frac{a,ab}{M}\right)$ being nonsplit. There are $3$ possibilities for $T$, namely $T = M(\sqrt{5})$, $T = M(\sqrt{17})$, or $T = M(\sqrt{5\cdot 17})$, each one giving a different obstruction. The resulting quaternion algebras giving the obstruction are
\begin{align*}
 \left( \frac{17 \cdot 5,5}{M}\right),  \left( \frac{17 \cdot 5,17}{M}\right),  \left( \frac{17,5}{M}\right)\,.
\end{align*}
Since they are all the split, the field $S$ does exist in all three cases.

\begin{remark}
As a byproduct of the above proof, we see that there do not exist abelian surfaces over $\Q$ such that $\End(A_\Qbar)\simeq \M_2(M)$ with $M$ a quadratic imaginary field with class group $\cyc 2\times \cyc 2$ and $\Gal(K/M)\simeq \dih 4$ or $\dih 6$. As shown by the table of \cite[p. 112]{Car01}, there do exist abelian surfaces over $\Q$ such that $\End(A_\Qbar)\simeq \M_2(M)$ with $M$ a quadratic imaginary field with class group $\cyc 2$ and $\Gal(K/M)\simeq \dih 4$ (resp. $\dih 6$). If $M$ is not exceptional, Theorem~\ref{th: obstructions} and Lemma~\ref{lemma: p mid D} imply that $2$ (resp. $3$) divide the discriminant of $M$ is a necessary condition for the existence of such an $A$. The examples of the table of \cite[p. 112]{Car01} show that this is actually a necessary and sufficient condition. 
\end{remark}

\subsubsection*{Proof of Corollary~\ref{corollary: main}}

Suppose that $A$ is an abelian surface defined over $\Q$ such that $A_\Qbar\sim E \times E'$, where $E$ and $E'$ are elliptic curves defined over $\Qbar$. If $E$ and $E'$ are not isogenous, then $\End(A_\Qbar)$ is 
$$
\Q\times \Q\,, \qquad M\times \Q\,\qquad \text{or}\qquad  M_1\times M_2\,,
$$ 
where $M$, $M_1\not \simeq M_2$ are quadratic imaginary fields, depending on whether none of $E$ and $E'$ has CM, only one of $E$ and $E'$ has CM, or both of $E$ and $E'$ have CM.  In any case, note that by \cite[Proposition 4.5]{FKRS12}, both $E$ and $E'$ can be defined over $\Q$, whereby the class number of $M$, $M_1$, and $M_2$ must be~$1$. Recalling that there are $9$ quadratic imaginary fields of class number $1$, this accounts for $46$ distinct $\Qbar$-endomorphism algebras. 

If $E$ and $E'$ are isogenous, we have that $\End(A_\Qbar)$ is $\M_2(M)$ or $\M_2(\Q)$, where~$M$ is a quadratic imaginary field, depending on whether $E$ has CM or not. Assume that we are in the former case. By Theorem~\ref{theorem: FG18}, we have that $M$ has class group 1, $\cyc 2$, or $\cyc 2 \times \cyc 2$. As explained in \cite[Remark 2.20]{FG18}, for all fields~$M$ with class group $1$ (resp. $\cyc 2$), abelian surfaces $A$ over $\Q$ with $\End(A_\Qbar)\simeq \M_2(M)$ can be easily found. Indeed, let $E$ be an elliptic curve with CM by the maximal order of~$M$ and defined over $\Q$ (resp. $\Q(j_E)$). Then consider the square (resp. the restriction of scalars from $\Q(j_E)$ down to $\Q$) of $E$. If $M$ has class group $\cyc 2\times \cyc 2$, invoke Theorem~\ref{theorem: main} to obtain 18 possibilities for $M$. Taking into account that there are 18 quadratic imaginary fields of class group $\cyc 2$ (see \cite{Wat03} for example), we obtain $46$ possibilities for the endomorphism algebra of a geometrically split abelian surface over $\Q$ with $\Qbar$-isogenous factors.

\subsubsection*{An open problem} 

We wish to conclude the article with an open question.

\begin{question}
Which is the subset of $\cA$ made of the $\Qbar$-endomorphism algebras $\End(\mathrm{Jac}(C)_\Qbar)$ of geometrically split Jacobians of genus 2 curves $C$ defined over~$\Q$?
\end{question}

Again the most intriguing case is to determine how many of the 45 possibilities for $\M_2(M)$, with $M$ a quadratic imaginary field, allowed by Theorem~\ref{theorem: main} for geometrically split abelian surfaces defined over~$\Q$ still occur among geometrically split Jacobians of genus $2$ curves $C$ defined over $\Q$.
Looking at the more restrictive setting that requires $\Jac(C)$ to be \emph{isomorphic} to the square of an elliptic curve with CM by the \emph{maximal order} of $M$, G\'elin, Howe, and Ritzenthaler \cite{GHR} have shown that there are $13$ possibilities for such an $M$ (all with class number $\leq 2$).

\bibliographystyle{halpha}
\bibliography{refs}

\end{document}